\newtheorem{theo}{Theorem}[section]
\newtheorem{exam}{Example}[section]
\newtheorem{definition}{Definition}[section]
\newtheorem{coro}{Corollary}[section]
\newtheorem{re}{Remark}[section]
\title{On distance and Laplacian matrices of trees with matrix weights}
\author{Fouzul Atik\thanks{Theoretical Statistics and Mathematics Unit, Indian Statistical Institute, New Delhi 110 016, India. Email: fouzulatik@gmail.com}  \and M. Rajesh Kannan\thanks{Department of Mathematics, Indian Institute of Technology Kharagpur, Kharagpur, India. Email: rajeshkannan@maths.iitkgp.ernet.in, rajeshkannan1.m@gmail.com } \and R. B. Bapat\thanks{Theoretical Statistics and Mathematics Unit, Indian Statistical Institute, New Delhi 110 016, India. Email: rbb@isid.ac.in}
}
\date{\today}
\begin{document}
\maketitle
\baselineskip=0.25in

\begin{abstract}
The \emph{distance matrix} of a simple connected graph $G$ is
$D(G)=(d_{ij})$, where $d_{ij}$ is the distance between the vertices
$i$ and $j$ in $G$. We consider a weighted tree $T$ on $n$ vertices
with edge weights are square matrix of same size. The distance
$d_{ij}$ between the vertices $i$ and $j$ is the sum of the weight
matrices of the edges in the unique path from $i$ to $j$. In this
article we establish a characterization for the trees in terms of
rank of (matrix) weighted Laplacian matrix associated with it. Then
we establish a necessary and sufficient condition for the distance
matrix $D$, with matrix weights, to be invertible and the formula
for the inverse of $D$, if it exists. Also we study some of the
properties of the distance matrices of matrix weighted trees in
connection with the Laplacian matrices, g-inverses and eigenvalues.
\end{abstract}

{\bf AMS Subject Classification(2010):} 05C50, 05C22.

\textbf{Keywords. } Tree, Distance matrix, Laplacian matrix, Matrix
weight, Inverse, g-inverse.

\section{Introduction}\label{section1}
Consider an $n$-vertex connected graph $G=(V,E)$, where
$V=\{1,2,\dots,n\}$ is the vertex set and
$E=\{e_1,e_2,\dots,e_m\}$ is the edge set. The \emph{distance
matrix} $D(G)$ of $G$ is an $n\times n$ matrix $(d_{ij})$ such that
$d_{ij}$ is the distance (length of a shortest path) between the
  vertices $i$ and $j$ in $G$. Let $T$ be a tree with $n$
vertices and let $W_i$ be the $s\times s$ edge weight matrix
associated with the edge $e_i,~i=1,2,\dots,n-1$. If $i$ and $j$ are
vertices of $T$, then there is a unique path from $i$ to $j$ in $T$.
The distance $d_{ij}$ between the vertices $i$ and $j$ is defined to
be the sum of the weights matrices of the edges in the unique path
connecting them. Then the distance matrix $D$ of $T$ is a block
matrix of order $ns\times ns$. Usually the Laplacian matrix of the weighted
tree $T$ with matrix weight is the block matrix $(l_{ij})$, where
$l_{ij}$ is equal to the sum of the weight matrices of the edges
incident on vertex $i$ if $i=j$, equal to $-W$ if there is an edge
between $i$ and $j$ and weight of that edge is $W$, and zero matrix
otherwise. In this article, we consider the Laplacian matrix
obtained by replacing each edge weight matrix of the tree by its
inverse.

The distance matrix of graphs, in particular trees, have been
studied by researchers for many years. An early remarkable result of
Graham and Pollak \cite{grah2} states that, if $T$ is a tree on $n$
vertices, then the determinant of the distance matrix $D$ of $T$ is
$(-1)^{n-1}(n-1)2^{n-2}$, which is independent of the structure of
the underlying tree. Subsequently, Graham and Lov\'{a}sz
\cite{grah1} derived a formula for the inverse of the distance
matrix of a tree. An extension of these two results for the weighted
trees, where the weights are being positive scalars, were obtained
by Bapat \emph{et al.} \cite{bapat1}. In \cite{bapat2}, Bapat
obtained the determinant of the distance matrix of a weighted tree
where weights are arbitrary matrices of fixed order. In
\cite{balaji}, Balaji and Bapat determined the inverse of the
distance matrix of a weighted tree where weights are positive
definite matrices. Bapat \emph{et al.,} considered the $q$-distance
matrix of an unweighted tree and gave formulae for the inverse and
the determinant\cite{bapat3}. Similar type of results were
established for distance matrices of trees with weights are from
rings \cite{zhou, zhou1}.

In this paper we consider weighted trees with the edge weights are
 matrices of same size. We establish a characterization for trees in terms of rank of (matrix) weighted Laplacian matrix  associated with it(Theorem \ref{char-tree}). Then  we derive a necessary and sufficient condition
for the distance matrix to be invertible and the formula for the inverse of the distance matrix, if it exist(Theorem \ref{main}). Also we study some of the properties of the distance matrices of matrix weighted trees in connection with the Laplacian matrices(Theorem \ref{extra}) and g-inverses(Theorem \ref{ginv}). Finally, we derive an interlacing inequality for the eigenvalues of distance and Laplacian matrices for the case of positive definite matrix weights(Theorem \ref{interlace}).

This article is organized as follows: In section \ref{tree-char}, we
study some of the properties of the matrix weighted Laplacian
matrices of graphs and provide a characterization for trees in terms
rank of weighted Laplacian matrix associated with it. In section
\ref{inverse_formula}, we establish a formula for the inverse of
distance matrix of  weighted trees whose weights are invertible
matrices. Using this, we prove some properties of distance matrices
related to Laplacian matrices, g-inverse and eigenvalues.

\section{A characterization for trees}\label{tree-char}

In this section, first  we define the notion of incidence matrix of a
weighted graph whose edges are assigned positive definite matrix
weights.

\begin{definition} Let $G$ be a connected weighted graph with $n$ vertices and $m$ edges and all
the edge weights of $G$ are positive definite matrices of order
$s\times s$. We assign an orientation to each edge of $G$. Then the
vertex edge incidence matrix $Q$ of $W$ is the $ns\times ms$ block
matrix whose row and column blocks are index by the vertices and
edges of $G$, respectively. The $(i,j)$th block of $Q$ is zero
matrix if the $i$th vertex and the $j$th edge are not incident and
it is $\sqrt{W_j}^{-1}$ (respectively, $-\sqrt{W_j}^{-1}$) if the $i$th vertex
and the $j$th edge are incident, and the edge originates
(respectively, terminates) at the $i$th vertex, where $W_j$ denotes
the weight of the $j$th edge.
\end{definition}

In this case, it is easy to see that, the Laplacian matrix $L$ can be written
as $L=QQ^T$.

Recall that, the Kronecker product of matrices $A=(a_{ij})$ of size
$m\times n$ and $B$ of size $p\times q$, denoted by $A\otimes B$, is
defined to be the $mp\times nq$ block matrix $(a_{ij}B)$. It is
known that for matrices $M$, $N$, $P$ and $Q$ of suitable sizes,
$MN\otimes PQ=(M\otimes P)(N\otimes Q)$\cite{hor}.

\begin{theo}
If $G$ is a connected weighted graph on $n$ vertices such that the
edge weights are $s\times s$ positive definite matrices, then rank
of $Q$ as well as rank of $L$ is $(n-1)s$.
\end{theo}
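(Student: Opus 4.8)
The plan is to factor the weighted incidence matrix $Q$ through the ordinary incidence matrix of $G$, so that the rank computation reduces to the classical unweighted case. Fix the same orientation used to build $Q$, and let $Q_0$ be the ordinary $n\times m$ vertex--edge incidence matrix of $G$, whose $(i,j)$ entry is $+1$ if edge $e_j$ originates at vertex $i$, $-1$ if it terminates at $i$, and $0$ otherwise. Let $R$ be the $ms\times ms$ block-diagonal matrix whose $j$th diagonal block is $\sqrt{W_j}^{-1}$. Comparing block entries, the $(i,j)$ block of $(Q_0\otimes I_s)R$ equals $(Q_0)_{ij}\,\sqrt{W_j}^{-1}$, which is exactly the $(i,j)$ block of $Q$ by the definition of the weighted incidence matrix; hence $Q=(Q_0\otimes I_s)R$.

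Since each $W_j$ is positive definite, its positive definite square root is invertible, so $\sqrt{W_j}^{-1}$ exists and the block-diagonal matrix $R$ is invertible. Right-multiplication by an invertible matrix preserves rank, so $\operatorname{rank} Q=\operatorname{rank}(Q_0\otimes I_s)$. I would then use the facts that the rank of a Kronecker product is the product of the ranks and that the incidence matrix $Q_0$ of a connected graph on $n$ vertices has rank $n-1$, to obtain $\operatorname{rank}(Q_0\otimes I_s)=\operatorname{rank}(Q_0)\cdot\operatorname{rank}(I_s)=(n-1)s$. Thus $\operatorname{rank} Q=(n-1)s$.

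To transfer this to $L$, I would invoke $L=QQ^{T}$ together with the standard identity $\operatorname{rank}(QQ^{T})=\operatorname{rank} Q$, valid over the reals: indeed $QQ^{T}x=0$ forces $\|Q^{T}x\|^{2}=x^{T}QQ^{T}x=0$, hence $Q^{T}x=0$, so $QQ^{T}$ and $Q^{T}$ have the same null space. This yields $\operatorname{rank} L=(n-1)s$ as well. I expect no serious obstacle; the only points needing care are the verification of the block identity $Q=(Q_0\otimes I_s)R$ with the correct orientation signs, and the invocation of $\operatorname{rank} Q_0=n-1$. As a self-contained alternative to the latter, one can compute the null space of $Q^{T}$ directly: for the edge $e_j$ joining vertices $a$ and $b$, the $j$th block of $Q^{T}x$ is $\sqrt{W_j}^{-1}(x_a-x_b)$, which vanishes iff $x_a=x_b$; connectivity then forces all blocks of $x$ to be equal, so the null space of $Q^{T}$ has dimension $s$ and $\operatorname{rank} Q=(n-1)s$ follows immediately.
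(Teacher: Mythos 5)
Your proposal is correct, and its primary route is genuinely different from the paper's. You factor the weighted incidence matrix as $Q=(Q_0\otimes I_s)R$, where $Q_0$ is the ordinary oriented incidence matrix and $R$ is the invertible block-diagonal matrix built from the blocks $\sqrt{W_j}^{-1}$, and then invoke two classical facts: $\operatorname{rank}(A\otimes B)=\operatorname{rank}(A)\operatorname{rank}(B)$ and $\operatorname{rank}(Q_0)=n-1$ for a connected graph. The paper instead argues directly in the block setting: it shows that any $X$ in the null space of $Q^T$ must have all of its $s$-blocks equal (using invertibility of each $\sqrt{W_j}^{-1}$ and connectivity), so that null space has dimension at most $s$, and it combines this lower bound on the rank with the vanishing block sum $Q^T(\mathbf{1}\otimes I_s)=0$ to pin the rank at exactly $(n-1)s$; this is precisely the ``self-contained alternative'' you sketch in your last sentence. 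Your factorization buys modularity: the weighted statement reduces cleanly to the unweighted one, and it becomes transparent that positive definiteness enters only through the existence and invertibility of the square roots $\sqrt{W_j}$, so the same argument works for any weighting admitting invertible square roots. On the final step you are in fact more careful than the paper: the paper simply asserts that $L=QQ^T$ has rank $(n-1)s$, whereas you justify $\operatorname{rank}(QQ^T)=\operatorname{rank}(Q)$ by noting that $QQ^{T}x=0$ forces $\|Q^{T}x\|^{2}=x^{T}QQ^{T}x=0$, which supplies the argument the paper leaves implicit.
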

\begin{proof}
We have $Q^T$ is a matrix of order $ms\times ns$, where $m$ is the
number of edges in $G$. Let $X$ is a vector in the null space in
$Q^T$ and let $X=(X_1,X_2,\cdots,X_n)^T$ be a partition where each
$X_i$ is of order $1\times s$. Then $Q^TX=0$ gives
$(\sqrt{W_i})^{-1}(X_i-X_j)=0$, where $W_i$ is the weight of the
$(i,j)$th edge in $G$. Since $W_i$ is positive definite, $X_i=X_j$.
As the graph $G$ is connected, we must have $X_i=X_j$ for all
$i,j=1,2,\cdots,n$. Thus $X$ must be of the form $e^T\otimes X_1^T$,
where $e$ is a all one vector of order $n$. So the dimension of null
space of $Q^T$ is at most $s$ and thus rank of $Q^T$ is at least
$(n-1)s$. Again the block row sum of $Q^T$ is zero, gives rank of
$Q^T$ is at most $(n-1)s$. Hence rank of $Q^T$ and $Q$ is $(n-1)s$.
Also as the edge weights of the graphs are positive definite
matrices, so $L=QQ^T$ and rank of $L$ is $(n-1)s$.
\end{proof}
Next we give an example of a weighted graph (not a tree) with
weights are nonsingular matrix for which rank of $L$ is not
$(n-1)s$.
\begin{exam}\label{eaxm2}\emph{Consider the cycle of order 4 (Figure 1) , where the edge weights are}
$W_1=\left[
       \begin{array}{cc}
         1 & 0 \\
         0 & 1 \\
       \end{array}
     \right],
 W_2=\left[
       \begin{array}{cc}
         0 & 1 \\
         1 & 0 \\
       \end{array}
     \right],
 W_3=\left[
       \begin{array}{cc}
         1 & 0 \\
         0 & 1 \\
       \end{array}
     \right], \emph{and}~
 W_4=\left[
       \begin{array}{cc}
         0 & 1 \\
         1 & 0 \\
       \end{array}
     \right]
$ \emph{and then the Laplacian matrix is given by}
\begin{eqnarray*}L&=&\left[
                                       \begin{array}{cccc}
                                         W_1^{-1}+W_4^{-1} & -W_1^{-1} & 0 & -W_4^{-1} \\
                                         -W_1^{-1} & W_1^{-1}+W_2^{-1} & -W_2^{-1} & 0 \\
                                         0 & -W_2^{-1} & W_2^{-1}+W_3^{-1} & -W_3^{-1} \\
                                         -W_4^{-1} & 0 & -W_3^{-1} & W_3^{-1}+W_4^{-1} \\
                                       \end{array}
                                     \right]\\
            &=&\left[
                                               \begin{array}{cccccccc}
                                                 1 & 1 & -1 & 0 & 0 & 0 & 0 & -1 \\
                                                 1 & 1 & 0 & -1 & 0 & 0 & -1 & 0 \\
                                                 -1 & 0 & 1 & 1 & 0 & -1 & 0 & 0 \\
                                                 0 & -1 & 1 & 1 & -1 & 0 & 0 & 0 \\
                                                 0 & 0 & 0 & -1 & 1 & 1 & -1 & 0 \\
                                                 0 & 0 & -1 & 0 & 1 & 1 & 0 & -1 \\
                                                 0 & -1 & 0 & 0 & -1 & 0 & 1 & 1 \\
                                                 -1 & 0 & 0 & 0 & 0 & -1 & 1 & 1 \\
                                               \end{array}
                                             \right]
\end{eqnarray*} \emph{Then one can verify that rank of $L$ is 5($\neq
(n-1)s$).}
\end{exam}
\newpage
\begin{figure}
\centering
\includegraphics[width=1.5in]{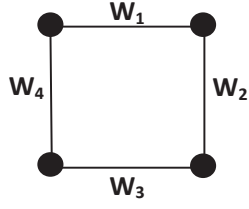}
\caption{Cycle of order 4}
\end{figure}
From the previous example we can see that for a weighted graph (not
a tree) with weights are nonsingular matrices for which rank of $L$
may not be $(n-1)s$. However from the theorem below, it is
guaranteed that if the graph is tree then for nonsingular matrix
weights of order $s$, then the rank of $L$ is $(n-1)s$.
\begin{theo}\label{weig-inv-tree}
If $T$ is a weighted tree on $n$ vertices such that the edge weights
are $s\times s$ nonsingular matrices, then rank of $L$ is $(n-1)s$.
\end{theo}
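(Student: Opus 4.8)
The plan is to produce an explicit rank factorization of $L$ that, unlike the factorization $L=QQ^T$ used in Theorem 2.1, does not require a symmetric square root of the weights. The obstruction in the general nonsingular case is precisely that $\sqrt{W_j}$ need not exist (or need not be available as a real matrix), so the symmetric factorization is unavailable. Instead I would write $L$ as a congruence-type product built from the \emph{unweighted} oriented incidence matrix. Let $B$ be the $n\times(n-1)$ oriented vertex--edge incidence matrix of $T$, with $B_{ik}=+1$ (respectively $-1$) when $e_k$ originates (respectively terminates) at vertex $i$ and $0$ otherwise, set $\mathcal{B}=B\otimes I_s$, and let $\mathcal{W}=\mathrm{diag}(W_1^{-1},\dots,W_{n-1}^{-1})$ be the block-diagonal matrix of inverse weights. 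The claim is that
\[
L=\mathcal{B}\,\mathcal{W}\,\mathcal{B}^T .
\]

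The first step is to verify this identity blockwise: the $(i,j)$ block of the right-hand side is $\sum_k B_{ik}B_{jk}W_k^{-1}$, which for $i=j$ sums $W_k^{-1}$ over all edges incident to $i$, and for $i\ne j$ joined by $e_k$ equals $B_{ik}B_{jk}W_k^{-1}=-W_k^{-1}$, matching the definition of $L$. The second step records two rank facts. Since each $W_k$ is nonsingular, $\mathcal{W}$ is nonsingular of order $(n-1)s$. Since $T$ is a tree on $n$ vertices with $n-1$ edges and is connected, its incidence matrix $B$ has rank $n-1$, hence \emph{full column rank}; consequently $\mathcal{B}=B\otimes I_s$ has full column rank $(n-1)s$, i.e. $\mathcal{B}$ is injective and $\ker\mathcal{B}=\{0\}$. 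The third step computes $\ker L$. If $Lx=0$, put $y=\mathcal{W}\mathcal{B}^Tx$; then $\mathcal{B}y=0$, so injectivity of $\mathcal{B}$ forces $y=0$, and nonsingularity of $\mathcal{W}$ then gives $\mathcal{B}^Tx=0$. Conversely $\mathcal{B}^Tx=0$ clearly yields $Lx=0$, so $\ker L=\ker\mathcal{B}^T$. Finally $\dim\ker\mathcal{B}^T=ns-\mathrm{rank}(\mathcal{B}^T)=ns-(n-1)s=s$ (indeed $\ker\mathcal{B}^T$ is spanned by the vectors $e\otimes v$, $v\in\mathbb{R}^s$, with $e$ the all-ones vector), and therefore $\mathrm{rank}(L)=ns-s=(n-1)s$.

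The step I expect to carry all the weight is the full-column-rank (injectivity) of $\mathcal{B}$, as this is exactly where the hypothesis that $T$ is a tree, rather than an arbitrary connected graph, enters: a graph with an independent cycle has an incidence matrix of column rank strictly less than its number of edges, so $\mathcal{B}$ acquires a nontrivial kernel and the deduction $\mathcal{B}y=0\Rightarrow y=0$ collapses. This is precisely the phenomenon exhibited by the cycle $C_4$ in Example \ref{eaxm2}, where the rank of $L$ drops below $(n-1)s$ despite the weights being nonsingular; the acyclicity of $T$ is what prevents such a drop here. I would remark that the present argument subsumes Theorem 2.1 for trees, since it never uses positive definiteness, only the nonsingularity of the weights.
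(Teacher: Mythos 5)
Your proof is correct, and it takes a genuinely different route from the paper's. The paper argues by induction on the number of vertices: for the inductive step it deletes a pendant vertex $v_1$ attached to $v_2$ by an edge of weight $W$, writes $L$ in block form in terms of the Laplacian $L^*$ of the smaller tree, uses the induction hypothesis together with the vanishing of the block row sums of $L^*$ to extract $(n-2)s$ linearly independent columns, and then adjoins $s$ further columns whose independence rests on the nonsingularity of $W$. You instead give a direct, induction-free argument: the factorization $L=(B\otimes I_s)\,\mathcal{W}\,(B\otimes I_s)^T$, with $B$ the \emph{unweighted} oriented incidence matrix and $\mathcal{W}$ the block diagonal of the $W_k^{-1}$, reduces everything to the fact that $B$ has full column rank $n-1$ precisely because $T$ is acyclic; the kernel computation $\ker L=\ker(B^T\otimes I_s)$ then goes through using only invertibility of $\mathcal{W}$ and injectivity of $B\otimes I_s$, with no square roots and hence no positive definiteness. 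Your approach buys several things the paper's proof does not make explicit: it identifies $\ker L$ exactly as $\{e\otimes v : v\in\mathbb{R}^s\}$ rather than only bounding its dimension; it isolates the single point where tree-ness enters (full column rank of the incidence matrix), which simultaneously explains why the conclusion fails for graphs with cycles, as in Example \ref{eaxm2}, and why Theorem \ref{weig-inv-not-tree} can go the other way; and it subsumes the first theorem of Section \ref{tree-char} in the tree case, since that theorem's factorization $L=QQ^T$ needs positive definite weights to define $Q$, whereas your asymmetric splitting does not. The paper's induction is more elementary in flavor, but its column-independence bookkeeping is the fussier of the two arguments; both are valid proofs.
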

\begin{proof}
To prove the result we use induction on the number of vertices $n$
of the tree $T$. For $n=2$, $T$ is just an edge and let weight on
this edge is $W$, which is a nonsingular matrix of order $s$. Then
clearly rank of $L=\left[\begin{array}{cc}
W_1^{-1} & -W_1^{-1} \\
-W_1^{-1} & W_1^{-1} \\
\end{array}
\right]$ is $s$.

Assume that the result is true for $n=k-1$. Let $T$ be a tree on $k$
vertices. Consider a pendent vertex $v_1$ and the vertex $v_2$ which
is adjacent to $v_1$ and let weight of the edge $(v_1,v_2)$ is $W$.
Let $L$ be the Laplacian matrix of $T$ in which vertex ordering is
$v_1,~v_2$ and followed by the other vertices of $T$. Let $T^*$ be
the tree obtained by removing the vertex $v_1$ from $T$ and $L^*$ be
the corresponding Laplacian matrix of $T^*$. Then
\begin{eqnarray*}L=\left[\begin{array}{cc|ccc}
                                         W^{-1} & -W^{-1} & 0 & \cdots & 0 \\
                                        -W^{-1} & W^{-1}+A & * & \cdots & * \\
                                        0 & * & * & \cdots & * \\
                                        \cdots & \cdots & \cdots & \cdots & \cdots \\
                                        0 & * & * & \cdots & *
                                       \end{array}
                                     \right]=\left[\begin{array}{c|c}
W^{-1} & -W^{-1}~0~0~\cdots~0 \\\hline
-W^{-1} &  \\
0 & e_1e_1^T\otimes W^{-1}+L^* \\
\cdots &  \\
0 & \\
\end{array}
\right],\end{eqnarray*} where $A$ is the first diagonal block of the
matrix $L^*$. Now by induction hypothesis rank of $L^*$ is $(n-2)s$.
As the block row sum of $L^*$ is zero, so we have the last $(n-2)s$
columns of $L^*$ are linearly independent. Then last $(n-2)s$
columns of $L$ are also linearly independent. Again as $W$ is
nonsingular, so all the $s$ linearly independent columns of $L$
corresponding to the vertex $v_2$ is linearly independent with the
last $(n-2)s$ columns of $L$. Hence we get $L$ has $(n-1)s$ number
of linearly independent columns so that the rank of $L$ is $(n-1)s$.
\end{proof}
In the following theorem, we prove that for a graph which is
not a tree, there exist an assignment of nonsingular matrix weights for its edges
such that rank of $L$ is less than $(n-1)s$.
\begin{theo}\label{weig-inv-not-tree}
Let $G$ be a connected weighted graph on $n$ vertices, which is not
a tree. Then there exist an assignment of $s\times s$ nonsingular
matrix weights on the the edges of $G$ such that rank of $L$ is less
than $(n-1)s$.
\end{theo}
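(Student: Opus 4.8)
The plan is to reduce the problem to the scalar case and then exploit the freedom to use negative edge conductances, which the hypothesis ``not a tree'' makes available. First I would take all edge weights to be scalar multiples of the identity, say $W_e=c_e^{-1}I_s$ with each $c_e$ a nonzero real number. With this choice the $(i,i)$ block of $L$ is $\left(\sum_{e\ni i}c_e\right)I_s$ and the $(i,j)$ block for an edge $e=\{i,j\}$ is $-c_eI_s$, so that $L=L_0\otimes I_s$, where $L_0$ is the ordinary (scalar) weighted Laplacian of $G$ with edge weights $c_e$. Since $\operatorname{rank}(A\otimes I_s)=s\,\operatorname{rank}(A)$, it suffices to produce nonzero reals $c_e$ for which $\operatorname{rank}(L_0)<n-1$; the corresponding weights $W_e=c_e^{-1}I_s$ are then automatically nonsingular, as the statement requires.

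Next I would invoke the weighted Matrix--Tree theorem. For a connected graph each principal cofactor of $L_0$ equals the spanning-tree polynomial $\tau(c)=\sum_{T}\prod_{e\in T}c_e$, the sum being over all spanning trees $T$ of $G$. Because $L_0$ is symmetric and $L_0\mathbf{1}=0$, its adjugate is a scalar multiple of $\mathbf{1}\mathbf{1}^T$, and comparing diagonal entries gives $\operatorname{adj}(L_0)=\tau(c)\,\mathbf{1}\mathbf{1}^T$. Hence $\operatorname{rank}(L_0)=n-1$ if and only if $\tau(c)\neq0$, and $\tau(c)=0$ forces $\operatorname{rank}(L_0)\le n-2$.

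Finally I would choose the conductances. As $G$ is connected but not a tree, it contains a cycle; fix an edge $e_0$ lying on such a cycle, so that $e_0$ is not a bridge. Set $c_e=1$ for every $e\neq e_0$ and regard $c_{e_0}$ as a variable. Then $\tau(c)=N_1\,c_{e_0}+N_0$, where $N_1$ is the number of spanning trees of $G$ containing $e_0$ and $N_0$ the number avoiding $e_0$. Both are positive: a spanning tree through $e_0$ exists since any spanning tree can be modified to include $e_0$, while $N_0$ counts the spanning trees of the connected graph $G-e_0$. Taking $c_{e_0}=-N_0/N_1$, which is nonzero, yields $\tau(c)=0$, whence $\operatorname{rank}(L_0)\le n-2$ and $\operatorname{rank}(L)\le (n-2)s<(n-1)s$.

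The routine ingredients here are the Kronecker identity for the rank and the Matrix--Tree evaluation; the one step that needs care---and which I regard as the crux---is the passage from $\tau(c)=0$ to a genuine drop of rank below $n-1$, that is, verifying that vanishing of the common cofactor forces $\operatorname{rank}(L_0)\le n-2$ rather than merely $\le n-1$. This is exactly what the adjugate description $\operatorname{adj}(L_0)=\tau(c)\,\mathbf{1}\mathbf{1}^T$ supplies. A secondary point to check is that $e_0$ being a non-bridge guarantees $N_0>0$, and this is precisely where the assumption that $G$ is not a tree enters.
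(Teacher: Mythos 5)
Your proof is correct and takes essentially the same route as the paper's: reduce to scalar weights times the identity (so $L=L_0\otimes I_s$), invoke the weighted Matrix--Tree theorem, and choose the weight $-N_0/N_1$ on a non-bridge edge of a cycle to annihilate the spanning-tree sum. The only cosmetic difference is that you justify the rank drop via the adjugate identity $\operatorname{adj}(L_0)=\tau(c)\,\mathbf{1}\mathbf{1}^T$, whereas the paper cites the all-cofactors form of the Matrix--Tree theorem (every cofactor, hence every $(n-1)$-minor, equals the spanning-tree sum); both are valid.
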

\begin{proof}
To prove the result, it is sufficient to give an assignment of non
zero scaler weights on the edges of the graph $G$ such that rank of
$L$ is less than $(n-1)$. As $G$ is not a tree then $G$ contains at
least one cycle. Let $(i,j)$ be an edge on a cycle of $G$. Now
assign a nonzero scaler weight $w$ to the edge $(i,j)$ and for other
edges assign the weight $1$. By matrix tree theorem (weighted
version), any cofactor of $L$ is the sum of the weights of all
spanning trees of $G$. Note that in this case weight of a tree means
the product of all edge weights of the tree. Then it is clear that
any cofactor of $L$ is equal to the form $aw+b$, where $a$ is the
number of spanning trees which contain the edge $(i,j)$ and $b$ is
the number of spanning trees which does not contain the edge
$(i,j)$. As $(i,j)$ is not a cut edge, both $a,b$ are nonzero. Now
we consider $w=-\frac{b}{a}$, which is a non zero negative number.
Then each cofactor of $L$ is becomes zero, so that each minor of
order $n-1$ becomes zero. Hence in this case rank of $L$ is less
than $(n-1)$.
\end{proof}
We illustrate the above theorem by the following example.
\begin{exam}\emph{Consider the weighted graph $G$ (not a tree) in Figure 2. Here in the edge $(1,3)$ we
assign weight $w$ and all other edges assign the weight 1. Then the
corresponding Laplacian matrix is given by
\begin{eqnarray*}L=\left[\begin{array}{cccc}
                                         w+2 & -1 & -w & -1 \\
                                        -1 & 2 & -1 & 0 \\
                                        -w & -1 & 2+w & -1 \\
                                        -1 & 0 & -1 & 2
                                       \end{array}
                                     \right].\end{eqnarray*}
Now $T_1,~T_2,~T_3$ and $T_4$ are the spanning trees of $G$ which
contain the edge $(1,3)$ and $T_5,~T_6,~T_7$ and $T_8$ are the
spanning trees of $G$ which does not contain the edge $(1,3)$. The
weight of each of the tree $T_1,~T_2,~T_3$ and $T_4$ is $w$ and
weight of each of the tree $T_5,~T_6,~T_7$ and $T_8$ is $1$. Sum of
the weights of all spanning trees of $G$ is $4w+4$. One can verify
that each cofactor of $L$ is also $4w+4$. Now if we take $w=-1$,
then rank of $L$ is $2~(< n-1)$.}
\end{exam}
\begin{figure}
\centering
\includegraphics[width=5in]{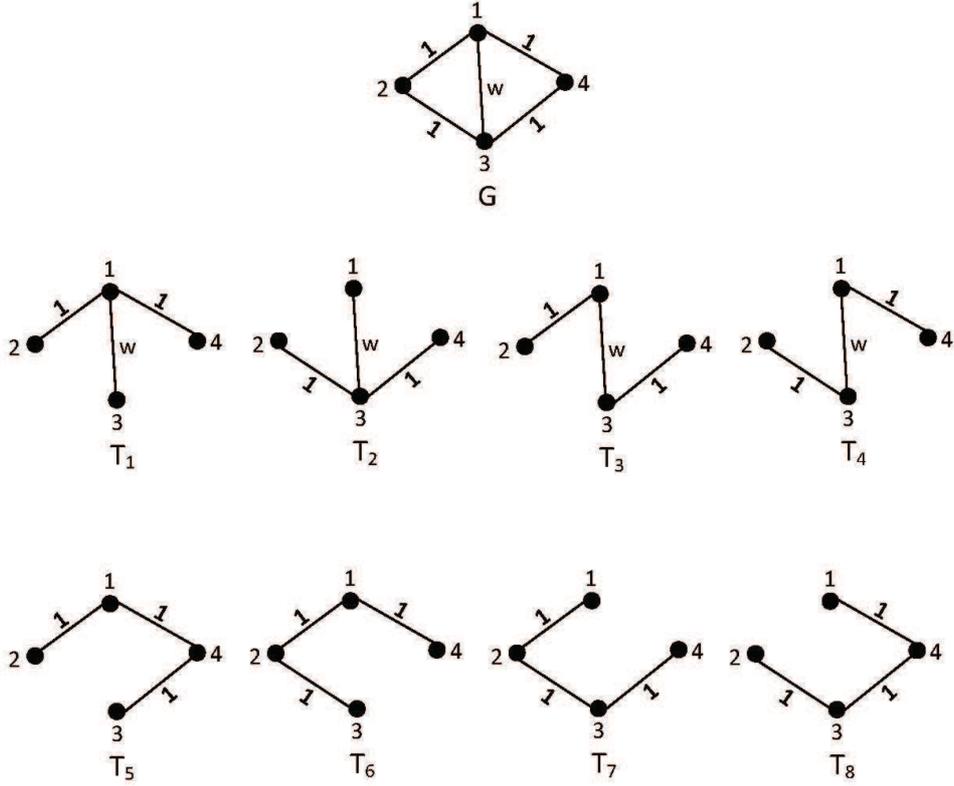}
\caption{Graph $G$ and its spanning trees}
\end{figure}
Now using Theorem \ref{weig-inv-tree} and Theorem
\ref{weig-inv-not-tree}, we can characterize the trees.
\begin{theo}\label{char-tree}

let $G$ be a graph on $n$ vertices. Then for all choices of
nonsingular weight matrices of the edges, rank of the associated
weighted Laplacian matrix $L$ is $(n-1)s$(where $s$ is the size of
the weight matrices) if and only if $G$ is a tree.
\end{theo}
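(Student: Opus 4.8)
The plan is to prove the biconditional by splitting it into its two implications and leaning on the two preceding theorems, filling in only the one case they do not address.

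For the ``if'' direction, suppose $G$ is a tree. Then $G$ is connected and acyclic, so the hypothesis of Theorem \ref{weig-inv-tree} is met, and that theorem applies verbatim: for every assignment of $s\times s$ nonsingular matrix weights the associated Laplacian $L$ has $\operatorname{rank}(L)=(n-1)s$. Hence this direction is immediate, with no further work required.

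For the ``only if'' direction I would argue the contrapositive: assuming $G$ is not a tree, I produce a choice of nonsingular weights for which $\operatorname{rank}(L)\neq (n-1)s$, which negates the statement ``for all choices, $\operatorname{rank}(L)=(n-1)s$''. A graph fails to be a tree for exactly one of two reasons: either it is connected but contains a cycle, or it is disconnected. The first case is precisely Theorem \ref{weig-inv-not-tree}, which already supplies a nonsingular weight assignment forcing $\operatorname{rank}(L)<(n-1)s$, so nothing new is needed there.

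The remaining possibility, that $G$ is disconnected, is the only gap left open by the earlier results, and it is the step I would treat most carefully (though it is elementary). The key observation is that the block row sums of $L$ vanish for every weighting: for any fixed $v\in\mathbb{R}^s$, stacking copies of $v$ over the vertices of a single connected component produces a vector in $\ker L$, independent of the weights. When $G$ has $c\geq 2$ components, reordering the vertices makes $L$ block diagonal with one diagonal block per component, and each block independently annihilates its own family of such vectors; this yields $\dim\ker L\geq cs$, hence $\operatorname{rank}(L)\leq (n-c)s\leq (n-2)s<(n-1)s$ for \emph{every} nonsingular assignment. Thus in the disconnected case any choice of weights witnesses the failure, which completes the contrapositive and, together with the ``if'' direction, establishes the characterization.
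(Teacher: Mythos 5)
Your proof is correct, and its skeleton is the same as the paper's: the paper proves this theorem in a single sentence by combining Theorem \ref{weig-inv-tree} (a tree forces rank $(n-1)s$ for every nonsingular weighting) with Theorem \ref{weig-inv-not-tree} (a connected non-tree admits a nonsingular weighting of deficient rank), which are exactly the two implications you invoke. Where you go beyond the paper is the disconnected case: Theorem \ref{weig-inv-not-tree} is stated only for \emph{connected} graphs, so the paper's one-line proof silently leaves out graphs that fail to be trees because they are disconnected, even though the statement reads only ``let $G$ be a graph on $n$ vertices''. Your handling of that case is sound: the block row sums of $L$ vanish for any weighting, so for each connected component and each $v\in\mathbb{R}^s$ the vector supported on that component with every block equal to $v$ lies in $\ker L$; with $c\geq 2$ components these vectors have disjoint supports, giving $\dim\ker L\geq cs$ and hence $\operatorname{rank}(L)\leq (n-c)s<(n-1)s$ for \emph{every} choice of nonsingular weights. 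So your argument is not merely correct but strictly more complete than the paper's, closing a minor gap --- unless one reads the standing connectedness assumption of Section \ref{section1} into the statement, in which case your extra case is harmless redundancy.
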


\section{Inverse of distance matrices of trees with matrix weights}\label{inverse_formula}
In this section first we state the formula for the determinant of
the distance matrix of a tree with matrix weight which is obtained
in \cite{bapat2}.
\begin{theo}\label{det}
Let $T$ be a tree with $n$ vertices and let $W_i$ be the $s\times s$
edge weight matrix associated with the edge $e_i,~i=1,2,\dots,n-1$,
and let $D$ be the distance matrix of $T$. Then
$$det~ D=(-1)^{(n-1)s}2^{(n-2)s}det~
(\prod_{i=1}^{n-1}W_i)det ~(\sum_{i=1}^{n-1}W_i).$$
\end{theo}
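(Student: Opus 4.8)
The plan is to prove the formula by induction on the number of vertices $n$, peeling off one pendant vertex at each step. Because the stated weights are not assumed invertible, I would first establish the identity on the dense open set $\mathcal U$ of weight assignments for which every $W_i$ and every partial edge-sum $\sum_{i\in A}W_i$ (with $\emptyset\neq A\subseteq\{1,\dots,n-1\}$) is nonsingular, and then extend it to all weights at the very end: the entries of $D$ are linear in the entries of the $W_i$, so $\det D$ is a polynomial in those entries, and so is the right-hand side since $\det(\prod W_i)=\prod\det W_i$ and $\det(\sum W_i)$ are polynomials. As $\mathcal U$ is dense, agreement on $\mathcal U$ forces agreement everywhere by continuity. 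Each $\det(W_i)$ and each $\det(\sum_{i\in A}W_i)$ is a nonzero polynomial (take all $W_i=I_s$), so $\mathcal U$ is indeed a dense open set.

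Core reduction. Let $v_n$ be a leaf adjacent to $v_{n-1}$ through the edge of weight $W:=W_{n-1}$, and let $T'$ (distance matrix $D'$, edges $e_1,\dots,e_{n-2}$) be obtained by deleting $v_n$. Ordering $v_n$ last, $d_{in}=d_{i,n-1}+W$ for $i\neq n$. Subtracting the block column (resp.\ block row) of $v_{n-1}$ from that of $v_n$ — two block elementary operations, each of determinant $1$ — leaves the leading $(n-1)\times(n-1)$ block equal to $D'$ and produces
\[
\tilde D=\begin{bmatrix} D' & e\otimes W\\ e^{T}\otimes W & -2W\end{bmatrix},\qquad e=\text{all-ones vector of order }n-1 .
\]
Taking the Schur complement with respect to the nonsingular corner $-2W$ gives $\det D=\det\tilde D=(-2)^{s}\det(W)\,\det\!\big(D'+\tfrac12(ee^{T})\otimes W\big)$, and the matrix determinant lemma applied to the block-rank-$s$ perturbation $(e\otimes I_s)(\tfrac12 W)(e^{T}\otimes I_s)$ yields $\det\!\big(D'+\tfrac12(ee^{T})\otimes W\big)=\det(D')\,\det\!\big(I_s+\tfrac12 W\Sigma'\big)$, where $\Sigma':=\sum_{i,j}\big[(D')^{-1}\big]_{ij}$ is the sum of all $s\times s$ blocks of $(D')^{-1}$.

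Thus everything hinges on an auxiliary block-sum identity, which I would carry through the induction simultaneously with the determinant formula: $\Sigma_T:=\sum_{i,j}[D^{-1}]_{ij}=2\big(\sum_i W_i\big)^{-1}$. Granting $\Sigma'=2(S')^{-1}$ with $S':=\sum_{i=1}^{n-2}W_i$, one gets $\det(I_s+\tfrac12W\Sigma')=\det\!\big((S'+W)(S')^{-1}\big)=\det(S)/\det(S')$ with $S=S'+W$; combining this with the inductive determinant formula for $D'$ and collecting the factors $(-2)^s$, $\det W$ and the ratio $\det S/\det S'$ produces exactly $(-1)^{(n-1)s}2^{(n-2)s}\det(\prod_{i\le n-1}W_i)\det(S)$, while the base case $n=2$ checks directly from $\det\begin{bmatrix}0&W\\W&0\end{bmatrix}=(-1)^s(\det W)^2$. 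To advance the block-sum identity itself I would compute $D^{-1}$ from the $2\times2$ block partition of $D$, using that the last block distance column is the $(n-1)$st block column of $D'$ with $W$ added in every block; the relevant cross terms then collapse to $\alpha=I_s+\Sigma'W$, $\beta=I_s+W\Sigma'$ and Schur corner $c=-(2W+W\Sigma'W)$, so that $\Sigma_T=\Sigma'+(\alpha-I_s)c^{-1}(\beta-I_s)$, and substituting $\Sigma'=2(S')^{-1}$ telescopes to $2S^{-1}$.

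The main obstacle I anticipate is precisely this block-sum identity. Two features make it delicate. First, the weight matrices are not assumed symmetric, so $D$ is only block-symmetric ($D_{ij}=D_{ji}$ as blocks, but $D\neq D^{T}$), and $D^{-1}$ must be manipulated without appealing to self-adjointness; this forces careful left/right bookkeeping of $\alpha$, $\beta$, $c$ (note $\alpha\neq\beta$ in general). Second, the identity only makes sense where the intermediate matrices are invertible, so it must be run on $\mathcal U$, and one must check that each Schur complement encountered in the peeling order is invertible there — which is why the partial sums $\sum_{i\in A}W_i$ are built into the definition of $\mathcal U$. Once the block-sum identity is secured the determinant computation is a routine collection of scalar and determinant factors, and the passage from $\mathcal U$ to arbitrary weights is immediate from the polynomial/continuity remark.
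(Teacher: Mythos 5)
Your proposed argument is mathematically sound; I checked the computations and they go through. The bordered form $\tilde D=\left[\begin{smallmatrix} D' & e\otimes W\\ e^{T}\otimes W & -2W\end{smallmatrix}\right]$ obtained by the two unimodular block operations is correct; the Schur step gives $\det D=(-2)^{s}\det(W)\det\bigl(D'+\tfrac12(ee^{T})\otimes W\bigr)$; and your auxiliary block-sum identity $\Sigma_T=2\bigl(\sum_i W_i\bigr)^{-1}$ is true and does telescope: with $\Sigma'=2(S')^{-1}$ one gets $c=-2W(S')^{-1}S$, hence $(\alpha-I_s)c^{-1}(\beta-I_s)=-2(S')^{-1}WS^{-1}$ and $\Sigma_T=2(S')^{-1}(S-W)S^{-1}=2S^{-1}$, exactly as you claim, with no symmetry assumption needed. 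The sign and power bookkeeping $(-2)^s\cdot(-1)^{(n-2)s}2^{(n-3)s}=(-1)^{(n-1)s}2^{(n-2)s}$ checks out, as does the base case, and the polynomial-density extension from $\mathcal U$ to arbitrary weights is legitimate since both sides are polynomials in the entries of the $W_i$.

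One point of comparison: this paper does not actually prove Theorem \ref{det} — it is quoted from \cite{bapat2} — so there is no in-paper proof to match against. What your route does parallel, quite closely, is the paper's proof of its inverse formula (Theorem \ref{main}): there too a pendant vertex is peeled off, the same row/column operations produce the bordered matrix $\left[\begin{smallmatrix} D & \textbf{1}\otimes W_n\\ \textbf{1}^{T}\otimes W_n & -2W_n\end{smallmatrix}\right]$, and the same Schur corner $-2W_n-2W_nH_{n-1}^{-1}W_n$ appears — your $c=-(2W+W\Sigma'W)$ with $\Sigma'=2(S')^{-1}$ is literally that matrix. The differences are instructive: the paper has the full inverse formula $D^{-1}=-\tfrac12L+\tfrac12\delta\delta^{T}\otimes H_{n-1}^{-1}$ available as induction hypothesis, from which your $\Sigma_T=2H^{-1}$ follows in one line by left- and right-multiplying by $\textbf{1}\otimes I_s$; you instead carry only the weaker summed identity through the induction, which is leaner and, as you verified, sufficient because the cross terms $\alpha$, $\beta$, $c$ involve $(D')^{-1}$ only through $\Sigma'$. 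Also, where the paper handles a singular partial sum $H_{n-1}$ by an $\epsilon I$ perturbation and a continuity argument, you use the cleaner polynomial-identity/Zariski-density argument, which simultaneously disposes of singular $W_i$ — necessary here, since Theorem \ref{det} (unlike Theorem \ref{main}) assumes nothing about invertibility. The only cosmetic gaps are that you should state the trivial base case $\Sigma_{T}=2W_1^{-1}$ for $n=2$ explicitly, and note that invertibility of each intermediate $D^{(k)}$ on $\mathcal U$ follows from the determinant formula already established at the previous stage of the induction; both are immediate.
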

\begin{theo}\label{main}
Let $T$ be a tree with $n$ vertices and  let $W_i$ be the $s\times
s$ edge weight matrices for  $i=1,2,\dots,n-1$, and let $D$ be the
distance matrix of $T$. Then $D$ is invertible if and only if the
weight matrices $W_i,~i=1,2,\dots,n-1$, and $(\sum_{i=1}^{n-1}W_i)$
are invertible. Let $D$ be invertible and let $L$ denote the
Laplacian matrix for weighting of $T$ in which each edge weight
matrix is replaced by its inverse. For $i=1,2,\dots,n$, set
$\delta_i=2-d_i$, where $d_i$ is the degree of the vertex $i$, and
$\delta^T=[\delta_1,\delta_2,\dots,\delta_n]$. Then the inverse of
$D$ is given by
$$D^{-1}=-\frac{1}{2}L+\frac{1}{2}\delta\delta^T\otimes(\sum_{i=1}^{n-1}W_i)^{-1}.$$
\end{theo}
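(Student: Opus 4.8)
The plan is to split the statement into two parts: the invertibility criterion and the explicit inverse formula. For the invertibility criterion, I would invoke the determinant formula from Theorem \ref{det}, which gives $\det D = (-1)^{(n-1)s} 2^{(n-2)s} \det\!\big(\prod_{i=1}^{n-1}W_i\big)\det\!\big(\sum_{i=1}^{n-1}W_i\big)$. Since a product of matrices is invertible precisely when each factor is, $\det\!\big(\prod W_i\big)\neq 0$ is equivalent to every $W_i$ being nonsingular. Hence $\det D\neq 0$ holds if and only if each $W_i$ and the sum $\sum_{i=1}^{n-1}W_i$ are invertible, which is exactly the claimed criterion. This part is essentially immediate from the cited determinant formula.

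For the inverse formula, the natural approach is verification: assuming $D$ is invertible, I would set $M=-\tfrac12 L+\tfrac12\,\delta\delta^T\otimes\big(\sum_{i=1}^{n-1}W_i\big)^{-1}$ and show directly that $DM = I_{ns}$ (equivalently $MD=I_{ns}$), so that $M=D^{-1}$. Writing this out block by block, I would compute the $(i,k)$ block of $DM$, which is $\sum_{j=1}^n D_{ij}M_{jk}$, where $D_{ij}=d_{ij}$ is the (matrix) distance and $M_{jk}=-\tfrac12 L_{jk}+\tfrac12\,\delta_j\delta_k\big(\sum W_i\big)^{-1}$. The key structural facts to exploit are: the Laplacian $L$ has $L_{jk}=-W^{-1}$ for an edge of weight $W$ joining $j$ and $k$, and $L_{jj}=\sum_{e\ni j}W_e^{-1}$; and the path-additivity of the distance, namely that along an edge $(j,k)$ the distances to a fixed vertex $i$ differ by exactly the weight of that edge (with an appropriate sign depending on which side of the edge $i$ lies). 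Combining the telescoping produced by multiplying $D$ against $L$ with the rank-one correction term $\delta\delta^T\otimes(\sum W_i)^{-1}$ should collapse everything to the identity.

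The main technical device I expect to need is the relation between $D$ and $L$ that encodes how distances change across a single edge. Concretely, for a fixed vertex $i$, consider the vector of blocks $(d_{i1},d_{i2},\dots,d_{in})$; when this is contracted against a column of $L$ associated with an edge $(j,k)$ of weight $W$, the term $-W^{-1}$ multiplies the difference $d_{ij}-d_{ik}$, and this difference equals $\pm W$ depending on orientation, so the product simplifies to $\pm I_s$. Summing these contributions over all edges incident to a vertex, and carefully tracking the combinatorics of how many edges point toward versus away from $i$, is what produces the degree terms $d_j$ and hence the quantities $\delta_j=2-d_j$. The rank-one term is then exactly what is required to correct the boundary discrepancies so that the off-diagonal blocks vanish and the diagonal blocks become $I_s$.

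The hard part will be the bookkeeping in this block computation: one must handle separately the diagonal blocks (where $i=k$) and the off-diagonal blocks (where $i\neq k$) of the product $DM$, and in each case verify that the telescoping sum from $-\tfrac12 DL$ combines correctly with the rank-one correction $\tfrac12 (D\delta)\delta^T\otimes(\sum W_i)^{-1}$. A useful intermediate quantity is the block-vector $D\delta$; computing $D\delta$ explicitly (I expect each of its blocks to reduce to a clean expression involving $\sum_{i=1}^{n-1}W_i$, by a telescoping argument analogous to the unweighted case where $\sum_j \delta_j d_{ij}$ is constant in $i$) would let me treat the correction term cleanly and isolate where the inverse $\big(\sum W_i\big)^{-1}$ cancels. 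Once $D\delta$ is pinned down, the remaining verification that $DM=I$ should follow by matching blocks. I would present the proof as this direct verification rather than attempting to derive the formula from scratch, since the formula is already supplied in the statement and checking $DM=I$ is the cleanest route given the determinant result already guarantees a two-sided inverse exists.
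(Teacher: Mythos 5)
Your proposal is correct, but it takes a genuinely different route from the paper. The paper proves the inverse formula by induction on the number of vertices: it deletes a pendant vertex, writes $D^*$, $L^*$, $\delta^*$ in block form, reduces $D^*$ by a congruence transformation, computes the Schur complement $-2W_n-2W_nH_{n-1}^{-1}W_n$ of the block $D$, and applies the block-inversion formula; moreover, since the induction hypothesis requires $H_{n-1}=\sum_{i=1}^{n-1}W_i$ to be invertible, the paper needs an additional $\epsilon$-perturbation and continuity argument to cover the case where that sum is singular. Your direct verification avoids all of this, and the two intermediate facts you identify are exactly the right ones; both hold for the tree structure directly. First, the telescoping you describe gives $DL=\mathbf{1}\delta^T\otimes I_s-2I_n\otimes I_s$: the $(i,k)$ block of $DL$ equals $\sum_{j\sim k}(d_{ik}-d_{ij})W_{e}^{-1}$ (where $W_e$ is the weight of the edge $\{j,k\}$), each summand is $+I_s$ or $-I_s$ according to whether $j$ lies toward or away from $i$ (note the order of factors makes the noncommutativity harmless, since $d_{ik}-d_{ij}=\pm W_e$), and counting gives $\delta_k I_s$ off the diagonal and $(\delta_k-2)I_s$ on it. Second, $D(\delta\otimes I_s)=\mathbf{1}\otimes\sum_{e}W_e$, because for each edge $e$ the set of vertices $j$ separated from $i$ by $e$ spans a subtree $C$ with $\sum_{j\in C}(2-d_j)=1$. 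Combining, $D\left(-\tfrac12 L+\tfrac12\delta\delta^T\otimes H^{-1}\right)=-\tfrac12\left(\mathbf{1}\delta^T\otimes I_s-2I_{ns}\right)+\tfrac12\left(\mathbf{1}\otimes H\right)\left(\delta^T\otimes H^{-1}\right)=I_{ns}$, and a right inverse of an invertible square matrix is the inverse. What your approach buys: no induction, no Schur complement, and no perturbation argument; it also reverses the paper's logical order, since the paper obtains the identity $DL=\mathbf{1}\delta^T\otimes I_s-2I_n\otimes I_s$ (Theorem \ref{extra}(ii)) only afterwards, as a consequence of the inverse formula, whereas you establish it first from path-additivity. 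What the paper's route buys is that it is purely mechanical matrix algebra once the block partition is written down, with no combinatorial counting lemma. Your plan leaves the two key computations as sketches, but both are routine to complete, so the proposal is sound.
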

\begin{proof}
From Theorem \ref{det}, it is easily verifiable that $D$ is
invertible if and only if $W_i,~i=1,2,\dots,n-1$, and
$(\sum_{i=1}^{n-1}W_i)$ are invertible.

For the second part, we use induction on the number of vertices $n$ of the tree $T$. For $n=2$, $T$ is just
an edge. Then

$D=\left[\begin{array}{cc}
0 & W_1 \\
W_1 & 0 \\
\end{array}\right]
,~L=\left[\begin{array}{cc}
W_1^{-1} & -W_1^{-1} \\
-W_1^{-1} & W_1^{-1} \\
\end{array}
\right]$ and $\delta=\left[
                      \begin{array}{c}
                        1 \\
                        1 \\
                      \end{array}
                    \right]
$. Then
\begin{eqnarray*}-\frac{1}{2}L+\frac{1}{2}\delta\delta^T\otimes
W_1^{-1}&=&-\frac{1}{2}\left[\begin{array}{cc}
W_1^{-1} & -W_1^{-1} \\
-W_1^{-1} & W_1^{-1} \\
\end{array}
\right]+\frac{1}{2}\left[\begin{array}{cc}
W_1^{-1} & W_1^{-1} \\
W_1^{-1} & W_1^{-1} \\
\end{array}
\right]\\
&=&\left[\begin{array}{cc}
0 & W_1^{-1} \\
W_1^{-1} & 0 \\
\end{array}
\right]=D^{-1}.\end{eqnarray*} So for $n=2$, the proof is complete.
For $n\geq3$, we assume that the inverse formula holds for trees of
order $n$. Let $T^*$ be a weighted tree on $n+1$ vertices, say
$1,2,\dots,n+1$, and the edge weights $W_i,~i=1,2,\dots,n$ are such
that  the weight matrices and $(\sum_{i=1}^{n}W_i)$ are invertible.
We consider a pendent vertex of $T^*$ and index it as $n+1$ and the
vertex adjacent to $n+1$ is indexed by $n$. Also let the weight of
the edge with end vertices $n$ and $n+1$ be $W_{n}$. We form a new
weighted tree $T$ by deleting the vertex $n+1$. We first assume that
$(\sum_{i=1}^{n-1}W_i)$ is invertible. Then by the induction
hypothesis, inverse formula for $D^{-1}$ is true for the tree $T$.
Let $D,~L$, and $\delta$ be the corresponding quantities for $T$ and
$D^*,~L^*$, and $\delta^*$ for $T^*$. Let $e_n$, $\textbf{1}$ and
$I$ denote the standard $n^{th}$ unit basis vector, the column
vector with all entries are equal to $1$ and the identity matrix of
appropriate size, respectively. Then, we have\vspace{0.25cm}
\begin{eqnarray*}L^*=\left[\begin{array}{c|c}
L+e_n e_n^T\otimes W_n^{-1} & -e_n\otimes W_n^{-1}\\\hline
-e_n^T\otimes W_n^{-1} & W_n^{-1}\\
\end{array}
\right],~ \delta^*=\left[
                     \begin{array}{c}
                       \delta-e_n \\\hline
                       1 \\
                     \end{array}
                   \right].
\end{eqnarray*}
and
\begin{eqnarray*}D^*=\left[\begin{array}{c|c}
D & D(e_n\otimes I)+\textbf{1}\otimes W_n \\\hline
(e_n^T\otimes I)D+\textbf{1}^T\otimes W_n & \textbf{0}\\
\end{array}
\right]
\end{eqnarray*}
We assume that $H_{n-1}=\sum_{i=1}^{n-1}W_i$ and
$H_{n}=\sum_{i=1}^{n}W_i$. Then
\begin{eqnarray}-\frac{1}{2}L^*+\frac{1}{2}\delta^*{\delta^*}^T\otimes(\sum_{i=1}^{n}W_i)^{-1}
\nonumber &=&-\frac{1}{2}L^*+\frac{1}{2}\delta^*{\delta^*}^T\otimes H_n^{-1}\\
\label{lhs}&=&\left[\begin{array}{c|c} P & Q
\\\hline
R & S \\
\end{array}
\right]
\end{eqnarray} where $P,~Q,~R,$ and $S$ are given by
\begin{eqnarray*}P&=&-\frac{1}{2}L-\frac{1}{2}e_n e_n^T\otimes
W_n^{-1}+\frac{1}{2}(\delta\delta^T-e_n\delta^T-\delta e_n^T+e_n
e_n^T)\otimes H_n^{-1}\\
Q&=&\frac{1}{2}e_n\otimes
W_n^{-1}+\frac{1}{2}(\delta-e_n)\otimes H_n^{-1}\\
R&=&\frac{1}{2}e_n^T\otimes
W_n^{-1}+\frac{1}{2}(\delta^T-e_n^T)\otimes
H_n^{-1}\\
S&=&-\frac{1}{2}W_n^{-1}+\frac{1}{2}H_n^{-1}
\end{eqnarray*}
Now one can verify that
\begin{eqnarray*}
\left[\begin{array}{c|c} I_n\otimes I_s & \textbf{0} \\\hline
-e_n^T\otimes I_s & I_s \\
\end{array}
\right]\left[\begin{array}{c|c} D & D(e_n\otimes
I)+\textbf{1}\otimes W_n \\\hline
(e_n^T\otimes I)D+\textbf{1}^T\otimes W_n & \textbf{0}\\
\end{array}
\right]\left[\begin{array}{c|c} I_n\otimes I_s & -e_n\otimes I_s
\\\hline
\textbf{0} & I_s \\
\end{array}
\right]
\\
=\left[\begin{array}{c|c} D & \textbf{1}\otimes W_n
\\\hline
\textbf{1}^T\otimes W_n & -2W_n\\
\end{array}
\right],
\end{eqnarray*}
and this implies
\begin{eqnarray}
\nonumber{D^*}^{-1}&=&\left[\begin{array}{c|c} D & D(e_n\otimes
I)+\textbf{1}\otimes W_n
\\\hline
(e_n^T\otimes I)D+\textbf{1}^T\otimes W_n & \textbf{0}\\
\end{array}
\right]^{-1}
\\
\label{inverse}&=&\left[\begin{array}{c|c} I_n\otimes I_s &
-e_n\otimes I_s
\\\hline
\textbf{0} & I_s \\
\end{array}
\right]\left[\begin{array}{c|c} D & \textbf{1}\otimes W_n
\\\hline
\textbf{1}^T\otimes W_n & -2W_n\\
\end{array}
\right]^{-1}\left[\begin{array}{c|c} I_n\otimes I_s & \textbf{0}
\\\hline
-e_n^T\otimes I_s & I_s \\
\end{array}
\right]
\end{eqnarray}
In the partitioned matrix $\left[\begin{array}{c|c} D &
\textbf{1}\otimes W_n
\\\hline
\textbf{1}^T\otimes W_n & -2W_n\\
\end{array}
\right]$, the Schur complement of the block $D$ is given by
\begin{eqnarray*}-2W_n-(\textbf{1}^T\otimes W_n)D^{-1}(\textbf{1}\otimes W_n)
&=&-2W_n-(\textbf{1}^T\otimes
W_n)\left(-\frac{1}{2}L+\frac{1}{2}\delta\delta^T\otimes
H_{n-1}^{-1}\right)(\textbf{1}\otimes W_n)
\\&&~~~~~~~~~~~~~~~~~~~~~~~~~~~~~~~~~~~~~~~~\mbox{[By induction
hypothesis]}\\
&=&-2W_n-(\textbf{1}^T\otimes
W_n)\left[-\frac{1}{2}L(\textbf{1}\otimes
W_n)+\frac{1}{2}(\delta\delta^T\otimes
H_{n-1}^{-1})(\textbf{1}\otimes
W_n)\right]\\
&=&-2W_n-(\textbf{1}^T\otimes
W_n)\left[\textbf{0}+\frac{1}{2}(\delta\delta^T\textbf{1})\otimes(H_{n-1}^{-1}W_n)\right]\\
&&~~~~~~~~~~~~~~~~~~~~~~~~~~~~~\mbox{[As each block row sum of $L$ is zero]}\\
&=&-2W_n-(\textbf{1}^T\otimes
W_n)\left[\delta\otimes(H_{n-1}^{-1}W_n)\right]\\
&&~~~~~~~~~~~~~~~~~~~~~~~~~~~~~~~~~~~~~~~~\mbox{[As $\delta^T\textbf{1}=\textbf{1}^T\delta=2$]}\\
&=&-2W_n-(\textbf{1}^T\delta)\otimes
(W_nH_{n-1}^{-1}W_n)\\
&=&-2W_n-2W_nH_{n-1}^{-1}W_n\end{eqnarray*} Now using a well known
formula for the inverse of a partition matrix we get
\begin{eqnarray}
\label{inverse1}\left[\begin{array}{c|c} D & \textbf{1}\otimes W_n
\\\hline
\textbf{1}^T\otimes W_n & -2W_n\\
\end{array}
\right]^{-1}=\left[\begin{array}{c|c} P_0 & Q_0
\\\hline
R_0 & S_0\\
\end{array}
\right]
\end{eqnarray}where $P_0,~Q_0,~R_0,$ and $S_0$ are given by
\begin{eqnarray*}P_0&=&D^{-1}+D^{-1}(\textbf{1}\otimes
W_n)(-2W_n-2W_nH_{n-1}^{-1}W_n)^{-1}(\textbf{1}^T\otimes W_n)D^{-1}\\
Q_0&=&-D^{-1}(\textbf{1}\otimes
W_n)(-2W_n-2W_nH_{n-1}^{-1}W_n)^{-1}\\
R_0&=&-(-2W_n-2W_nH_{n-1}^{-1}W_n)^{-1}(\textbf{1}^T\otimes W_n)D^{-1}\\
S_0&=&(-2W_n-2W_nH_{n-1}^{-1}W_n)^{-1}
\end{eqnarray*}
Now we simplify $P_0,~Q_0,~R_0,$ and $S_0$ one by one.
\begin{eqnarray*}P_0&=&D^{-1}+\delta\otimes(H_{n-1}^{-1}W_n)(-2W_n-2W_nH_{n-1}^{-1}W_n)^{-1}\delta^T\otimes(W_n H_{n-1}^{-1})\\
&=&D^{-1}+\delta\delta^T\otimes(H_{n-1}^{-1}W_n)(-2W_n-2W_nH_{n-1}^{-1}W_n)^{-1}(W_n H_{n-1}^{-1})\\
&=&D^{-1}-\frac{1}{2}\delta\delta^T\otimes(H_{n-1}^{-1}W_nH_n^{-1})\\
Q_0&=&-\delta\otimes(H_{n-1}^{-1}W_n)(-2W_n-2W_nH_{n-1}^{-1}W_n)^{-1}=\frac{1}{2}\delta\otimes
H_n^{-1}\mbox{ and } ~R_0=\frac{1}{2}\delta^T\otimes H_n^{-1}\\
S_0&=&(-2W_n-2W_nH_{n-1}^{-1}W_n)^{-1}=-\frac{1}{2}(W_n^{-1}-H_n^{-1})
\end{eqnarray*}
Thus using (\ref{inverse}) and (\ref{inverse1}) we get
{\small
\begin{eqnarray} \nonumber
&&{D^*}^{-1}=\left[\begin{array}{c|c} I_n\otimes I_s & -e_n\otimes
I_s
\\\hline
\textbf{0} & I_s \\
\end{array}
\right]\left[\begin{array}{c|c}
D^{-1}-\frac{1}{2}\delta\delta^T\otimes(H_{n-1}^{-1}W_nH_n^{-1}) &
\frac{1}{2}\delta\otimes H_n^{-1}
\\\hline
\frac{1}{2}\delta^T\otimes H_n^{-1} & -\frac{1}{2}(W_n^{-1}-H_n^{-1})\\
\end{array}
\right]\left[\begin{array}{c|c} I_n\otimes I_s & \textbf{0}
\\\hline
-e_n^T\otimes I_s & I_s \\
\end{array}
\right]\\
\nonumber&&=\left[\begin{array}{c|c}
D^{-1}-\frac{1}{2}\delta\delta^T\otimes(H_{n-1}^{-1}W_nH_n^{-1})-\frac{1}{2}e_n\delta^T\otimes
H_n^{-1} & \frac{1}{2}\delta\otimes
H_n^{-1}+\frac{1}{2}e_n\otimes(W_n^{-1}-H_n^{-1})
\\\hline
\frac{1}{2}\delta^T\otimes H_n^{-1} & -\frac{1}{2}(W_n^{-1}-H_n^{-1})\\
\end{array}
\right]\left[\begin{array}{c|c} I_n\otimes I_s & \textbf{0}
\\\hline
-e_n^T\otimes I_s & I_s \\
\end{array}
\right]\\
\nonumber&&=\left[\begin{array}{c|c}
-\frac{1}{2}L+\frac{1}{2}\delta\delta^T\otimes
(H_{n-1}^{-1}-H_{n-1}^{-1}W_{n}^{-1}H_{n}^{-1})-\frac{1}{2}e_n\delta^T\otimes
H_n^{-1} & \frac{1}{2}\delta\otimes
H_n^{-1}+\frac{1}{2}e_n\otimes(W_n^{-1}-H_n^{-1})
\\\hline
\frac{1}{2}\delta^T\otimes H_n^{-1} & -\frac{1}{2}(W_n^{-1}-H_n^{-1})\\
\end{array}
\right]\left[\begin{array}{c|c} I_n\otimes I_s & \textbf{0}
\\\hline
-e_n^T\otimes I_s & I_s \\
\end{array}
\right]\\
\label{rhs}&&=\left[\begin{array}{c|c}
-\frac{1}{2}L+\frac{1}{2}\delta\delta^T\otimes
H_n^{-1}-\frac{1}{2}e_n\delta^T\otimes H_n^{-1} &
\frac{1}{2}\delta\otimes
H_n^{-1}+\frac{1}{2}e_n\otimes(W_n^{-1}-H_n^{-1})
\\\hline
\frac{1}{2}\delta^T\otimes H_n^{-1} & -\frac{1}{2}(W_n^{-1}-H_n^{-1})\\
\end{array}
\right]\left[\begin{array}{c|c} I_n\otimes I_s & \textbf{0}
\\\hline
-e_n^T\otimes I_s & I_s \\
\end{array}
\right]\\
\nonumber&&=\left[\begin{array}{c|c} P & Q
\\\hline
R & S \\
\end{array}
\right]\\
\nonumber&&=-\frac{1}{2}L^*+\frac{1}{2}\delta^*{\delta^*}^T\otimes
H_n^{-1}
\end{eqnarray}
}
Now suppose $H_{n-1}=\sum_{i=1}^{n-1}W_i$ is not invertible. We
chose $\epsilon>0$ such that $W_1^*=W_1+\epsilon I$,
$H^*_{n-1}=\sum_{i=1}^{n-1}W_i+\epsilon I$, and
$H^*_{n}=\sum_{i=1}^{n}W_i+\epsilon I$ are simultaneously
invertible. Then using the new weights of the tree $T^*$ inverse
formula holds. Now by observing (\ref{lhs}) and (\ref{rhs}) we can
see that the term $H_{n-1}^{-1}$ is omitted in both the equation.
Hence by the continuity argument of matrices we can say that as
$\epsilon\rightarrow0$, $W_1^*\rightarrow W_1$,
$(W_1^*)^{-1}\rightarrow W_1^{-1}$, $(H^*_{n})^{-1}\rightarrow
H_{n}^{-1}$ and $H^*_{n}\rightarrow H_{n}$. Hence our inverse
formula is true for this case.
\end{proof}

We note that the formula for inverse of the distance matrix of a
weighted tree with the weights are invertible elements of a ring is given in \cite{zhou1}. One may derive the formula given from \cite[Theorem 10]{zhou1}. However, we present an independent proof here.   Next, let us illustrate the above theorem by an example.
\begin{figure}
\centering
\includegraphics[width=1.8in]{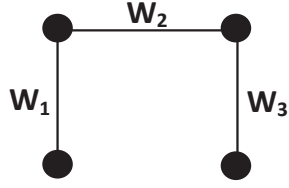}
\caption{Path of order 4}
\end{figure}
\begin{exam}\label{eaxm1}\emph{Consider a path of order 4 (Figure 3) , where the edge weights are}
$W_1=\left[
       \begin{array}{cc}
         2 & 0 \\
         0 & 1 \\
       \end{array}
     \right],
 W_2=\left[
       \begin{array}{cc}
         0 & 2 \\
         1 & 0 \\
       \end{array}
     \right],
 W_3=\left[
       \begin{array}{cc}
         1 & 0 \\
         0 & 2 \\
       \end{array}
     \right]
$ \emph{and then} $\sum_{i=1}^{3}W_i=\left[
       \begin{array}{cc}
         3 & 2 \\
         1 & 3 \\
       \end{array}
     \right]$. \emph{According to Theorem \ref{main}, the distance matrix of
     the tree is invertible. In this case} $\delta=[1,0,0,1]^T$ \emph{and
     distance matrix is given by} $$D=\left[
                                       \begin{array}{cccc}
                                         0 & W_1 & W_1+W_2 & W_1+W_2+W_3 \\
                                         W_1 & 0 & W_2 & W_2+W_3 \\
                                         W_1+W_2 & W_2 & 0 & W_3 \\
                                         W_1+W_2+W_3 & W_2+W_3 & W_3 & 0 \\
                                       \end{array}
                                     \right]=\left[
                                               \begin{array}{cccccccc}
                                                 0 & 0 & 2 & 0 & 2 & 2 & 3 & 2 \\
                                                 0 & 0 & 0 & 1 & 1 & 1 & 1 & 3 \\
                                                 2 & 0 & 0 & 0 & 0 & 2 & 1 & 2 \\
                                                 0 & 1 & 0 & 0 & 1 & 0 & 1 & 2 \\
                                                 2 & 2 & 0 & 2 & 0 & 0 & 1 & 0 \\
                                                 1 & 1 & 1 & 0 & 0 & 0 & 0 & 2 \\
                                                 3 & 2 & 1 & 2 & 1 & 0 & 0 & 0 \\
                                                 1 & 3 & 1 & 2 & 0 & 2 & 0 & 0 \\
                                               \end{array}
                                             \right]
     $$
\emph{Also the Laplacian matrix is given by}
$$L=\left[
                                       \begin{array}{cccc}
                                         W_1^{-1} & -W_1^{-1} & 0 & 0 \\
                                         -W_1^{-1} & W_1^{-1}+W_2^{-1} & -W_2^{-1} & 0 \\
                                         0 & -W_2^{-1} & W_2^{-1}+W_3^{-1} & -W_3^{-1} \\
                                         0 & 0 & -W_3^{-1} & W_3^{-1} \\
                                       \end{array}
                                     \right]=\left[
                                             \begin{array}{cccccccc}
                                                 .5 & 0 & -.5 & 0 & 0 & 0 & 0 & 0 \\
                                                 0 & 1 & 0 & -1 & 0 & 0 & 0 & 0 \\
                                                 -.5 & 0 & .5 & 1 & 0 & -1 & 0 & 0 \\
                                                 0 & -1 & .5 & 1 & -.5 & 0 & 0 & 0 \\
                                                 0 & 0 & 0 & -1 & 1 & 1 & -1 & 0 \\
                                                 0 & 0 & -.5 & 0 & .5 & .5 & 0 & -.5 \\
                                                 0 & 0 & 0 & 0 & -1 & 0 & 1 & 0 \\
                                                 0 & 0 & 0 & 0 & 0 & -.5 & 0 & .5 \\
                                               \end{array}
                                             \right]
     $$
\emph{Then one can verify that}
$D^{-1}=-\frac{1}{2}L+\frac{1}{2}\delta\delta^T\otimes(W_1+W_2+W_3)^{-1}$.
\end{exam}
As a consequence of the above theorem, we obtain one of the main results presented in \cite{balaji}.
\begin{coro}\emph{\cite[Theorem 3.7]{balaji}}
Let $T$ be a tree with $n$ vertices and positive definite matrix
weights. Suppose $D$ and $L$ are respectively the distance and
Laplacian matrices of $T$. Let $\delta_i$ denote the degree of $i$th
vertex of $T$ and
$\tau=(2-\delta_1,2-\delta_2,\cdots,2-\delta_n)^T$. If
$\Delta=\tau\otimes I_s$ and $R$ is the sum of all the weights of
$T$, then $$D^{-1}=-\frac{1}{2}L+\frac{1}{2}\Delta R^{-1}\Delta^T.$$
\end{coro}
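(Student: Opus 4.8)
The plan is to obtain this corollary as a direct specialization of Theorem \ref{main}, requiring no new induction or structural argument. First I would verify that the hypotheses of Theorem \ref{main} are satisfied. Since every edge weight $W_i$ is positive definite, each $W_i$ is invertible, and $R=\sum_{i=1}^{n-1}W_i$, being a sum of positive definite matrices, is itself positive definite and hence invertible. Thus Theorem \ref{main} applies and $D$ is invertible with
$$D^{-1}=-\frac{1}{2}L+\frac{1}{2}\,\delta\delta^T\otimes R^{-1},$$
where $\delta=[\,2-d_1,\dots,2-d_n\,]^T$ and $d_i$ is the degree of vertex $i$.

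Next I would reconcile the notation. The corollary uses $\delta_i$ for the degree of the $i$th vertex, so its vector $\tau=(2-\delta_1,\dots,2-\delta_n)^T$ is precisely the vector denoted $\delta$ in Theorem \ref{main}. Under this identification, and with $\Delta=\tau\otimes I_s$, the identity to be proved is $D^{-1}=-\frac{1}{2}L+\frac{1}{2}\Delta R^{-1}\Delta^T$, so it suffices to show that $\Delta R^{-1}\Delta^T$ equals $\tau\tau^T\otimes R^{-1}$.

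The remaining step is a single application of the Kronecker mixed-product identity $(M\otimes P)(N\otimes Q)=MN\otimes PQ$ recalled earlier in this section. Writing $\Delta=\tau\otimes I_s$, $\Delta^T=\tau^T\otimes I_s$, and regarding the $s\times s$ block $R^{-1}$ as $[1]\otimes R^{-1}$, I would compute
$$\Delta R^{-1}\Delta^T=(\tau\otimes I_s)([1]\otimes R^{-1})(\tau^T\otimes I_s)=(\tau\otimes R^{-1})(\tau^T\otimes I_s)=\tau\tau^T\otimes R^{-1}.$$
Substituting this into the formula for $D^{-1}$ furnished by Theorem \ref{main} yields $D^{-1}=-\frac{1}{2}L+\frac{1}{2}\Delta R^{-1}\Delta^T$, as required.

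I do not anticipate any genuine difficulty, since all the substantive content is contained in Theorem \ref{main} and the rest is bookkeeping plus the one-line Kronecker manipulation above. The only point demanding care is the clash of symbols: $\delta_i$ denotes the degree of vertex $i$ in the corollary but the quantity $2-d_i$ in Theorem \ref{main}. Keeping this distinction explicit is exactly what makes the identification of $\tau$ with the theorem's $\delta$ transparent, and hence the whole argument routine.
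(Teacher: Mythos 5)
Your proposal is correct and follows exactly the route the paper intends: the paper presents this corollary as an immediate consequence of Theorem \ref{main}, with the only content being that positive definite weights (and their sum) are invertible, plus the Kronecker identity $\Delta R^{-1}\Delta^T=(\tau\otimes I_s)([1]\otimes R^{-1})(\tau^T\otimes I_s)=\tau\tau^T\otimes R^{-1}$, which you carry out correctly. Your explicit handling of the notational clash between $\delta_i$ (degree) and the theorem's $\delta_i=2-d_i$ is a fine point of bookkeeping that the paper leaves implicit.
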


Now, we present extensions of some of the results presented in \cite{bapatbook}. Among these results, the first four parts are true for trees with nonsingular matrix weights and the last
part holds true for trees with positive definite matrix weights.
\begin{theo}\label{extra}
Let $T$ be a tree with $n$ vertices, let $W_i$, $i=1,2,\dots,n-1$,
be the $s\times s$ edge weight matrices, and let $D$ be the distance
matrix of $T$. Let $D$ be invertible and let $L$ denote the
Laplacian matrix for weighting of $T$ in which each edge weight
matrix is replaced by its inverse. For $i=1,2,\dots,n$, set
$\delta_i=2-d_i$, where $d_i$ is the degree of the vertex $i$, and
$\delta^T=[\delta_1,\delta_2,\dots,\delta_n]$. Let $J$ denote the
matrix of appropriate size with all entries equals to $1$. Then the
following hold:
\begin{eqnarray*}(i)&&LD=\delta\textbf{\emph{1}}^T\otimes I_s-2I_n\otimes I_s~~~~~~~~~~~~~~~~~~~~~~~~~~~~~~~~~~~~~~~~~~~~~~~~~~~~~~~~~~~~~~~~~~~~~~~~~~~~~~~~~~~~~\\
(ii)&&DL=\textbf{\emph{1}}\delta^T\otimes I_s-2I_n\otimes I_s \\
(iii)&&LDL=-2L \\
(iv)&&(D^{-1}-L)^{-1}=\frac{1}{3}D+\frac{1}{3}J\otimes\sum_{i=1}^{n-1}W_i\\
(v)&&\mbox{If all the weight matrices are positive definite then $Q^TDQ=-2I_{(n-1)s\times(n-1)s}$, where $Q$ denotes the}\\
&&\mbox{incidence matrix of the weighted tree $T$ by replacing each
edge weight matrix by its inverse.}
\end{eqnarray*}
\end{theo}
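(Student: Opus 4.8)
The common ingredient for parts (i)--(iv) is the inverse formula of Theorem~\ref{main}, which I abbreviate by writing $R=\sum_{i=1}^{n-1}W_i$, so that $D^{-1}=-\tfrac12 L+\tfrac12\,\delta\delta^T\otimes R^{-1}$. Reading off $LD$ and $DL$ from the two identities $D^{-1}D=I$ and $DD^{-1}=I$ reduces everything to understanding the single block-vector quantity $(\delta^T\otimes I_s)D$. The plan is therefore: first isolate this quantity as a lemma, then feed it into the inverse formula to obtain (i)--(iii) almost mechanically, obtain (iv) by exhibiting the claimed matrix as a right inverse, and finally deduce (v) from (iii) together with the rank statement of the opening theorem of Section~\ref{tree-char}.

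The key lemma I would prove is
\[
(\delta^T\otimes I_s)\,D=\mathbf{1}^T\otimes R \qquad\text{and}\qquad D\,(\delta\otimes I_s)=\mathbf{1}\otimes R .
\]
Since the $(i,j)$ block of $D$ equals $\sum_{e\in\mathrm{path}(i,j)}W_e$, the $j$-th block of $(\delta^T\otimes I_s)D$ is $\sum_i\delta_i\,D_{ij}=\sum_e W_e\big(\sum_{i:\,e\in\mathrm{path}(i,j)}\delta_i\big)$. Deleting an edge $e$ splits $T$ into two subtrees; the inner sum runs over the subtree $S_e$ not containing $j$, and the crux is the purely combinatorial identity $\sum_{i\in S}(2-d_i)=1$ valid for every subtree $S$ cut off by a single edge. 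This holds because a subtree on $|S|$ vertices has $|S|-1$ internal edges contributing $2(|S|-1)$ to $\sum_{i\in S}d_i$, plus exactly one half-edge leaving $S$, so $\sum_{i\in S}d_i=2|S|-1$ and hence $\sum_{i\in S}(2-d_i)=1$. Consequently each edge weight is counted exactly once and the block equals $\sum_e W_e=R$, independently of $j$; the second identity follows identically because $D_{ij}=D_{ji}$ as blocks. \textbf{This lemma is the main obstacle}, and it is the only place where the tree structure (as opposed to formal algebra) really enters.

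Granting the lemma, parts (i)--(iii) are short. Factoring $\delta\delta^T\otimes R^{-1}=(\delta\otimes R^{-1})(\delta^T\otimes I_s)$ and using the lemma gives $(\delta\delta^T\otimes R^{-1})D=\delta\mathbf{1}^T\otimes I_s$, so $I=D^{-1}D$ yields (i); the mirror factorization $\delta\delta^T\otimes R^{-1}=(\delta\otimes I_s)(\delta^T\otimes R^{-1})$ together with $D(\delta\otimes I_s)=\mathbf{1}\otimes R$ and $I=DD^{-1}$ yields (ii). For (iii) I would multiply (i) on the right by $L$ and use that the block row and block column sums of a Laplacian vanish, i.e. $L(\mathbf{1}\otimes I_s)=0$ and $(\mathbf{1}^T\otimes I_s)L=0$, so that $(\delta\mathbf{1}^T\otimes I_s)L=0$ and $LDL=-2L$.

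Finally, for (iv) I would set $M=D^{-1}-L=-\tfrac32 L+\tfrac12\delta\delta^T\otimes R^{-1}$ and verify directly that $M\big(\tfrac13 D+\tfrac13 J\otimes R\big)=I$: using (i) one gets $MD=3I-\delta\mathbf{1}^T\otimes I_s$, while $L(\mathbf{1}\otimes I_s)=0$, $\delta^T\mathbf{1}=2$ and $R^{-1}R=I_s$ give $M(J\otimes R)=\delta\mathbf{1}^T\otimes I_s$; the two contributions cancel the unwanted rank-one block and leave $I$, so the displayed matrix is the inverse. For (v), positive definiteness of the weights lets the opening theorem of Section~\ref{tree-char} apply, giving $\operatorname{rank}Q=(n-1)s$, so $Q$ has full column rank and $Q^TQ$ is invertible; substituting $L=QQ^T$ into (iii) gives $Q\,(Q^TDQ)\,Q^T=-2\,QQ^T=Q(-2I)Q^T$, and cancelling the full-rank factors by multiplying with $(Q^TQ)^{-1}Q^T$ on the left and $Q(Q^TQ)^{-1}$ on the right yields $Q^TDQ=-2I_{(n-1)s}$.
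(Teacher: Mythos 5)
Your proposal is correct, and it reaches the paper's conclusions through a genuinely different key step. Both proofs pivot on the same identity — in your notation $(\delta^T\otimes I_s)D=\mathbf{1}^T\otimes R$, which is equivalent to the paper's $\left[\delta^T\otimes R^{-1}\right]D=\mathbf{1}^T\otimes I_s$ — but you obtain it by direct combinatorics, whereas the paper obtains it algebraically: it left-multiplies the inverse formula of Theorem \ref{main} by $(\mathbf{1}^T\otimes I_s)$, uses $(\mathbf{1}^T\otimes I_s)L=0$ and $\mathbf{1}^T\delta=2$ to get $(\mathbf{1}^T\otimes I_s)D^{-1}=\delta^T\otimes R^{-1}$, and then multiplies back by $D$. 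Your edge-counting argument, resting on the subtree identity $\sum_{i\in S}(2-d_i)=1$ for each component $S$ of $T-e$, is a self-contained combinatorial fact: it needs neither the inverse formula nor any invertibility of the weights, and it makes explicit exactly where the tree structure enters — something the paper's algebraic manipulation hides inside Theorem \ref{main}. The price is that your route is longer at this point; the paper's derivation is essentially three lines once Theorem \ref{main} is in hand. After the lemma, the two proofs run in parallel: (i)--(iii) are the same mechanical computations (the paper also gets (iii) by killing $(\delta\mathbf{1}^T\otimes I_s)L$ via the zero block column sums of $L$), and (iv) is in both cases a verification that the claimed matrix is a one-sided (hence two-sided) inverse of $D^{-1}-L$. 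For (v), the paper starts from (i) to get $Q(Q^TDQ)=-2Q$ and cancels a single $Q$ by a left inverse, while you start from (iii) and cancel $Q$ on both sides using $(Q^TQ)^{-1}Q^T$; both cancellations are justified by the full column rank of $Q$ from the first theorem of Section \ref{tree-char}, so the two arguments are equivalent in substance.
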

\begin{proof}(i) As $D$ is invertible, so the matrices $W_i,~i=1,2,\dots,n-1$, and
$\sum_{i=1}^{n-1}W_i$ are invertible and
\begin{eqnarray}
\nonumber D^{-1}&=&-\frac{1}{2}L+\frac{1}{2}\delta\delta^T\otimes(\sum_{i=1}^{n-1}W_i)^{-1}\\
\nonumber\Rightarrow I_{ns}&=&-\frac{1}{2}LD+\frac{1}{2}\left[\delta\delta^T\otimes(\sum_{i=1}^{n-1}W_i)^{-1}\right]D\\
\label{ld}\Rightarrow
LD&=&\left[\delta\delta^T\otimes(\sum_{i=1}^{n-1}W_i)^{-1}\right]D-2I_{ns}
\end{eqnarray}
Again we have
\begin{eqnarray*}
(\textbf{1}^T\otimes I_s)D^{-1}&=&(\textbf{1}^T\otimes I_s)\left[-\frac{1}{2}L+\frac{1}{2}\delta\delta^T\otimes(\sum_{i=1}^{n-1}W_i)^{-1}\right]\\
&=&0+\frac{1}{2}\textbf{1}^T\delta\delta^T\otimes(\sum_{i=1}^{n-1}W_i)^{-1}\\
&=&\delta^T\otimes(\sum_{i=1}^{n-1}W_i)^{-1}\\
\Rightarrow (\textbf{1}^T\otimes
I_s)&=&\left[\delta^T\otimes(\sum_{i=1}^{n-1}W_i)^{-1}\right]D\\
\Rightarrow (\delta\otimes I_s)(\textbf{1}^T\otimes
I_s)&=&\left[\delta\delta^T\otimes(\sum_{i=1}^{n-1}W_i)^{-1}\right]D\\
\Rightarrow
\left[\delta\delta^T\otimes(\sum_{i=1}^{n-1}W_i)^{-1}\right]D&=&(\delta\textbf{1}^T\otimes
I_s)
\end{eqnarray*}
Hence from (\ref{ld}) we get
$$LD=\delta\textbf{1}^T\otimes I_s-2I_n\otimes I_s$$
(ii) Applying similar technique as in (i) we have the following,
$$DL=D\left[\delta\delta^T\otimes(\sum_{i=1}^{n-1}W_i)^{-1}\right]-2I_{ns},~D^{-1}(\textbf{1}\otimes I_s)=\delta\otimes(\sum_{i=1}^{n-1}W_i)^{-1}~\mbox{and}~ \textbf{1}\delta^T\otimes I_s=D\left[\delta\delta^T\otimes(\sum_{i=1}^{n-1}W_i)^{-1}\right]$$
Hence using these we get that
$$DL=\textbf{1}\delta^T\otimes I_s-2I_n\otimes I_s$$
(iii) \begin{eqnarray*} LDL&=&\left[\delta\textbf{1}^T\otimes
I_s-2I_n\otimes I_s\right]L\\
&=&\left[\delta\textbf{1}^T\otimes
I_s\right]L-2L\\
&=&\delta\left[(\textbf{1}^T\otimes
I_s)L\right]-2L\\
&=&0-2L\\
&=&-2L
\end{eqnarray*}
(iv) Let us assume $\sum_{i=1}^{n-1}W_i=H_{n-1}$. Then
\begin{eqnarray*} (D^{-1}-L)[\frac{1}{3}D+\frac{1}{3}J\otimes
H_{n-1}]&=&\frac{1}{3}D^{-1}D-\frac{1}{3}L
D+\frac{1}{3}D^{-1}(J\otimes H_{n-1})-\frac{1}{3}L (J\otimes H_{n-1})\\
&=&\frac{1}{3}I_{ns}-\frac{1}{3}[\delta\textbf{1}^T\otimes I_s-2I_n\otimes I_s]+\frac{1}{3}D^{-1}(\textbf{1}\textbf{1}^T\otimes H_{n-1})-\frac{1}{3}L (\textbf{1}\textbf{1}^T\otimes H_{n-1})\\
&=&I_{ns}-\frac{1}{3}\delta\textbf{1}^T\otimes I_s+\frac{1}{3}D^{-1}(\textbf{1}\otimes I_s)(\textbf{1}^T\otimes H_{n-1})-\frac{1}{3}L (\textbf{1}\otimes I_s)(\textbf{1}^T\otimes H_{n-1})\\
&=&I_{ns}-\frac{1}{3}\delta\textbf{1}^T\otimes
I_s+\frac{1}{3}(\delta\otimes H_{n-1}^{-1})(\textbf{1}^T\otimes
H_{n-1})-0\\
&=&I_{ns}-\frac{1}{3}\delta\textbf{1}^T\otimes
I_s+\frac{1}{3}\delta\textbf{1}^T\otimes
I_s\\
&=&I_{ns}
\end{eqnarray*}

Hence
$$(D^{-1}-L)^{-1}=\frac{1}{3}D+\frac{1}{3}J\otimes\sum_{i=1}^{n-1}W_i$$
(v) If all the weight matrices are positive definite, then the incidence
matrix $Q$ is well defined and Laplacian Matrix $L=QQ^T$. Since,
\begin{eqnarray*} LD&=&\delta\textbf{1}^T\otimes I_s-2I_n\otimes I_s, \\
\mbox{we have}~~ QQ^TDQ&=&(\delta\textbf{1}^T\otimes I_s)
Q-2(I_n\otimes
I_s) Q\\
&=&(\delta\otimes I_s)(\textbf{1}^T\otimes I_s) Q-2Q\\
&=&0-2Q\\
&=&-2Q
\end{eqnarray*}
Now as all the weight matrices are nonsingular, the incidence matrix
$Q$ has full column rank and thus it admits a left inverse. Hence
\begin{eqnarray*}Q^TDQ=-2I_{(n-1)s\times(n-1)s}
\end{eqnarray*}
\end{proof}

\begin{re}
\emph{Part(iii) of the above theorem gives an alternate simple proof
for a more general version of Lemma 3.2. of \cite{balaji}. The proof
presented in \cite{balaji} uses mathematical induction.}
\end{re}

A matrix $G$ of order $n\times m$ is said to be a generalized inverse (or a $g$-inverse) of $A$ if $AGA=A$.
Next, we shall consider the properties related to distance matrix, Laplacian matrix and g-inverse.
Let $e_{ij}$ be a $n\times 1$ vector with $i$ coordinate equal to 1,
$j$ coordinate equal to $-1$, and zeros elsewhere. Let $B$ be an
$ns\times ns$ matrix which is partitioned into the form
\begin{eqnarray}\label{partmatrix}B=\left(
                     \begin{array}{cccc}
                       B_{1,1} & B_{1,2} & \cdots & B_{1,n} \\
                       B_{2,1} & B_{2,2} & \cdots & B_{2,n} \\
                       \cdots & \cdots & \cdots & \cdots \\
                       B_{n,1} & B_{n,2} & \cdots & B_{n,n}
                     \end{array}
                   \right)
\end{eqnarray} where each $B_{i,j}$, for $i,j=1,2,\cdots,n$, is a submatrix of order
$s$. Then $(e_{ij}\otimes I_s)^TB(e_{ij}\otimes
I_s)=B_{i,i}+B_{j,j}-B_{i,j}-B_{j,i}$.

\begin{theo}\label{ginv}
Let $G$ be a weighted graph on $n$ vertices, where each
weight is a positive definite matrix. Let $D$ be the distance matrix
of $G$ and $L$ denote the Laplacian matrix of $G$ in which each edge
weight matrix is replaced by its inverse.

(i)If $H^1$ and $H^2$ are any two $g$-inverse of $L$, then
$(e_{ij}\otimes I_s)^TH^1(e_{ij}\otimes I_s)=(e_{ij}\otimes
I_s)^TH^2(e_{ij}\otimes I_s)$.

(ii) If $G$ is a tree on $n$ vertices and $H$ is a $g$-inverse of
$L$, then $H_{i,i}+H_{j,j}-H_{i,j}-H_{j,i}=D_{i,j}$.
\end{theo}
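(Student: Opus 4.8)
The plan is to prove part (i) by a standard $g$-inverse invariance argument and then to reduce part (ii) to part (i) by exhibiting one convenient $g$-inverse of $L$. The whole argument hinges on showing that the columns of $e_{ij}\otimes I_s$ lie in the range of $L$.

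For part (i) I would first record that $L=QQ^T$ is symmetric and that $\mathcal{R}(L)=\mathcal{R}(Q)$, since $\operatorname{rank}(QQ^T)=\operatorname{rank}(Q)$ over the reals while $\mathcal{R}(QQ^T)\subseteq\mathcal{R}(Q)$ always. As established in the proof of the first theorem of Section~\ref{tree-char}, for a connected weighted graph the null space of $Q^T$ consists precisely of the vectors $e\otimes y$ with $y\in\mathbb{R}^s$, where $e$ is the all-ones vector; taking orthogonal complements gives
\[
\mathcal{R}(L)=\mathcal{R}(Q)=\Bigl\{\,z=(z_1,\dots,z_n)^T:\ \textstyle\sum_{k=1}^n z_k=0\,\Bigr\},
\]
the set of block vectors whose block-sum vanishes. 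The key observation is then that each column of $e_{ij}\otimes I_s$ has $i$-th block a unit vector, $j$-th block its negative, and all other blocks zero, so its block-sum is $0$; hence every column lies in $\mathcal{R}(L)$ and we may write $e_{ij}\otimes I_s=LK$ for some matrix $K$.

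With this inclusion in hand the invariance is immediate: if $H$ is any $g$-inverse of $L$, then using $L^T=L$ and $LHL=L$,
\[
(e_{ij}\otimes I_s)^T H (e_{ij}\otimes I_s)=K^T LHL\,K=K^T L K,
\]
which does not depend on $H$. Applying this to $H^1$ and $H^2$ yields part (i).

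For part (ii) I would observe that a tree with positive definite edge weights has every $W_i$ and $\sum_{i=1}^{n-1}W_i$ positive definite, hence invertible, so Theorem~\ref{main} guarantees that $D$ is invertible and Theorem~\ref{extra}(iii) gives $LDL=-2L$. Consequently $L\bigl(-\tfrac12 D\bigr)L=L$, that is, $-\tfrac12 D$ is a particular $g$-inverse of $L$. By part (i), for any $g$-inverse $H$,
\[
H_{i,i}+H_{j,j}-H_{i,j}-H_{j,i}=(e_{ij}\otimes I_s)^T H (e_{ij}\otimes I_s)=-\tfrac12\,(e_{ij}\otimes I_s)^T D (e_{ij}\otimes I_s),
\]
where the outer equalities are the observation preceding the theorem. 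Expanding the right-hand side by that same observation and using $D_{i,i}=D_{j,j}=0$ together with $D_{i,j}=D_{j,i}$ (both are the sum of the weights along the path from $i$ to $j$) gives $-\tfrac12(0+0-D_{i,j}-D_{j,i})=D_{i,j}$, which is the claim.

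The main obstacle is the conceptual step in part (i): correctly identifying $\mathcal{R}(L)$ with the block-sum-zero subspace and recognizing that the columns of $e_{ij}\otimes I_s$ sit inside it. Once this inclusion is available, both the $g$-inverse invariance and the passage to the explicit $g$-inverse $-\tfrac12 D$ are routine. A minor point worth noting is that the range computation uses connectivity of $G$ (so that $\ker Q^T$ has dimension exactly $s$); this is harmless, since the distance matrix is only defined for connected graphs and trees are connected.
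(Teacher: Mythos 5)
Your proposal is correct and follows essentially the same route as the paper: both parts hinge on showing the columns of $e_{ij}\otimes I_s$ lie in the column space of $L$ (you via $\mathcal{R}(L)=\mathcal{R}(Q)$ and the block-sum-zero characterization, the paper via orthogonality to the null space basis $\mathbf{1}\otimes e_k$ — the same fact), writing $e_{ij}\otimes I_s=LK$, and invoking $LHL=L$; part (ii) then uses $LDL=-2L$ to exhibit $-\tfrac12 D$ as a $g$-inverse exactly as the paper does. Your explicit verification that Theorem~\ref{main} applies (so that Theorem~\ref{extra}(iii) is legitimately invoked) is a small point of extra care beyond the paper's write-up, but not a different argument.
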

\begin{proof}(i) We have $Rank(L)=Rank(QQ^T)=Rank(Q)=ns-s$. Also the vectors $1\otimes
e_i$ for $i=1,2,\cdots,s$ form a  basis  for the
null space of $L$. Since $(\textbf{1}\otimes e_k)^T(e_{i,j}\otimes
I_s)=0$ for all $i,j,k\in\{1,2,\cdots,n\}$. Thus column space of
$e_{i,j}\otimes I_s$ is a subspace of column space of $L$. So there
exist a matrix $C$ such that $e_{i,j}\otimes I_s=LC$. Then
\begin{eqnarray*}(e_{ij}\otimes I_s)^T(H^1-H^2)(e_{ij}\otimes
I_s)=C^TL^T(H^1-H^2)LC=C^T(LH^1L-LH^2L)C=0
\end{eqnarray*}
Hence $(e_{ij}\otimes I_s)^TH^1(e_{ij}\otimes I_s)=(e_{ij}\otimes
I_s)^TH^2(e_{ij}\otimes I_s)$.

(ii) From Theorem \ref{extra}, we have  $LDL=-2L$. Thus
$-\frac{D}{2}$ is a $g$-inverse of $L$. Then by part (i)
$(e_{ij}\otimes I_s)^TH(e_{ij}\otimes I_s)=(e_{ij}\otimes
I_s)^T(-\frac{D}{2})(e_{ij}\otimes I_s)$.

So
$H_{i,i}+H_{j,j}-H_{i,j}-H_{j,i}=-\frac{1}{2}D_{i,i}-\frac{1}{2}D_{j,j}+\frac{1}{2}D_{i,j}+\frac{1}{2}D_{j,i}=D_{i,j}$
\end{proof}
Next let us recall a known result from \cite{bapat2}.
\begin{theo}\emph{\cite{bapat2}}
Let $T$ be a tree with n vertices, let $W_i$ be a positive definite
$s\times s$ edge weight matrix associated with the edge $e_i, i = 1,
2,\cdots, n-1$, and let $D$ be the distance matrix of $T$. Then $D$
has s positive and $(n - 1)s$ negative eigenvalues.
\end{theo}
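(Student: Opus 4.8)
The plan is to pin down the inertia of $D$ by exhibiting two complementary subspaces on which the quadratic form $x\mapsto x^{T}Dx$ is definite, and then invoking the standard inertia principle: if a real symmetric matrix is negative (respectively positive) definite on a subspace of dimension $d$, then it has at least $d$ negative (respectively positive) eigenvalues. This fact follows from a dimension count, since a subspace of definiteness must intersect trivially the span of eigenvectors of the opposite or zero sign.

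First I would dispose of the negative eigenvalues using Theorem \ref{extra}(v). As the weights are positive definite, the incidence matrix $Q$ is well defined, has full column rank $(n-1)s$, and satisfies $Q^{T}DQ=-2I_{(n-1)s}$. Hence for any nonzero $x$ in the column space of $Q$, writing $x=Qy$ with $y\neq 0$, we get $x^{T}Dx=y^{T}Q^{T}DQ\,y=-2\|y\|^{2}<0$. Thus $D$ is negative definite on the $(n-1)s$-dimensional subspace $\mathrm{col}(Q)$, so $D$ has at least $(n-1)s$ negative eigenvalues.

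Next I would produce an $s$-dimensional subspace on which $D$ is positive definite, namely $S=\{\mathbf{1}\otimes v : v\in\mathbb{R}^{s}\}$, which is precisely the null space of $Q^{T}$ and the orthogonal complement of $\mathrm{col}(Q)$. For $x=\mathbf{1}\otimes v$ one has $x^{T}Dx=v^{T}\bigl(\sum_{i,j}D_{ij}\bigr)v$. The key combinatorial step is to evaluate $\sum_{i,j}D_{ij}$: interchanging the two summations and counting, for each edge $e_{k}$ whose removal splits $T$ into parts of sizes $n_{k}$ and $n-n_{k}$, the weight $W_{k}$ is summed over exactly the $2n_{k}(n-n_{k})$ ordered pairs whose connecting path uses $e_{k}$, so $\sum_{i,j}D_{ij}=\sum_{k=1}^{n-1}2n_{k}(n-n_{k})\,W_{k}$. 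Since each coefficient $2n_{k}(n-n_{k})$ is positive and each $W_{k}$ is positive definite, this matrix is positive definite, whence $x^{T}Dx>0$ for every nonzero $x\in S$, giving at least $s$ positive eigenvalues.

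Finally, because $D$ has order $ns$ and $(n-1)s+s=ns$, the two lower bounds force equality and leave no room for zero eigenvalues, so $D$ has exactly $s$ positive and $(n-1)s$ negative eigenvalues (the nonsingularity being also guaranteed by Theorem \ref{det}). I expect the combinatorial identity $\sum_{i,j}D_{ij}=\sum_{k}2n_{k}(n-n_{k})W_{k}$ to be the main obstacle; once it is in hand, the negative-definite part and the inertia bookkeeping are immediate. I would avoid trying to diagonalize $D$ by the congruence $P=[\,Q\mid \mathbf{1}\otimes I_{s}\,]$, since the cross block $(\mathbf{1}^{T}\otimes I_{s})DQ$ does not vanish in general (its $k$th block is a nonzero scalar multiple of $\sqrt{W_{k}}$), which is exactly why the two-subspace inertia argument is the cleaner route.
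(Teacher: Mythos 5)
Your proof is correct; the main point of comparison is that the paper does not prove this statement at all --- it is quoted as a known result, with a citation to \cite{bapat2}, and then used as an input to Theorem \ref{interlace}. Your argument therefore supplies a proof the paper lacks, and it does so entirely from the paper's own toolkit: Theorem \ref{extra}(v) applies legitimately here (positive definite weights make each $W_i$ and $\sum_i W_i$ invertible, hence $D$ invertible by Theorems \ref{det} and \ref{main}), and together with the full column rank of $Q$ (the rank computation of Section \ref{tree-char}) it gives negative definiteness of the quadratic form on $\mathrm{col}(Q)$, of dimension $(n-1)s$; the edge-counting identity $\sum_{i,j}D_{ij}=\sum_{k}2n_k(n-n_k)W_k$ is correct and gives positive definiteness on the $s$-dimensional space $\{\mathbf{1}\otimes v : v\in\mathbb{R}^s\}$; the dimension count $(n-1)s+s=ns$ then fixes the inertia exactly and rules out zero eigenvalues. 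There is no circularity, since Theorem \ref{extra}(v) rests on the inverse formula of Theorem \ref{main}, which is proved by induction with no reference to eigenvalues of $D$. In effect you reverse the paper's logical arrangement: where the paper imports the inertia statement from \cite{bapat2} and separately proves $Q^TDQ=-2I$, you show the inertia statement is itself a quick consequence of $Q^TDQ=-2I$, which would let the authors drop the external dependence.

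One small inaccuracy in your closing aside (harmless, since you do not use it): the $k$th block of $(\mathbf{1}^T\otimes I_s)DQ$ equals, up to sign, $(n-2n_k)\sqrt{W_k}$, where $n_k$ is the size of one component of $T-e_k$; this scalar multiple of $\sqrt{W_k}$ \emph{can} vanish, namely when $e_k$ splits the tree into two equal halves. Your broader point stands: the congruence by $\left[\,Q\mid\mathbf{1}\otimes I_s\,\right]$ does not block-diagonalize $D$ in general, which is exactly why the two-subspace inertia argument is the right route.
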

Our next result gives some interlacing inequality for the
eigenvalues of $D$ and $L$ where the graph is a weighted tree on $n$
vertices. This result extends a result given in \cite{merr}.
\begin{theo}\label{interlace}
Let $T$ be a weighted tree on $n$ vertices, where each
weight is a positive definite matrix of order $s$. Let $D$ be the
distance matrix of $T$ and $L$ denote the Laplacian matrix of $T$ in
which each edge weight matrix is replaced by its inverse. Let
$\mu_1\geq\mu_2\geq\cdots\geq\mu_s>0>\mu_{s+1}\geq\cdots\geq\mu_{ns}$
be the eigenvalues of $D$ and
$\lambda_1\geq\lambda_2\geq\cdots\geq\lambda_{ns-s}>\lambda_{ns-s+1}=\cdots=\lambda_{ns}=0$
be the eigenvalues of $L$. Then
$$\mu_{s+i}\leq-\frac{2}{\lambda_i}\leq\mu_i~~\mbox{for}~~i=1,2,\cdots,ns-s.$$
\end{theo}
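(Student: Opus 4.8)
The plan is to reduce the statement to a single application of the Poincar\'{e} separation theorem (Cauchy interlacing), using the identity $Q^TDQ=-2I_{(n-1)s}$ from Theorem \ref{extra}(v) as the bridge between $D$ and $L$. Since every weight is positive definite, the incidence matrix $Q$ is well defined, has full column rank $(n-1)s$, and satisfies $L=QQ^T$; this is exactly the hypothesis under which part (v) of Theorem \ref{extra} was established. I would first record the eigenvalue dictionary between $L=QQ^T$ and the auxiliary matrix $A:=Q^TQ$. Because $Q$ has full column rank, $A$ is a positive definite matrix of order $(n-1)s$, and since $QQ^T$ and $Q^TQ$ share the same nonzero eigenvalues, the eigenvalues of $A$ are precisely the positive eigenvalues $\lambda_1\geq\cdots\geq\lambda_{ns-s}$ of $L$.

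The key construction is then to orthonormalize $Q$. Set $R=QA^{-1/2}$, where $A^{-1/2}$ denotes the positive definite square root of $A^{-1}$. A direct computation gives $R^TR=A^{-1/2}Q^TQA^{-1/2}=I_{(n-1)s}$, so $R$ is an $ns\times(n-1)s$ matrix with orthonormal columns. Invoking $Q^TDQ=-2I_{(n-1)s}$, I obtain $R^TDR=A^{-1/2}(Q^TDQ)A^{-1/2}=-2A^{-1}$, whose eigenvalues are exactly $-2/\lambda_1\geq-2/\lambda_2\geq\cdots\geq-2/\lambda_{ns-s}$; note that the ordering stays monotone in $i$ because inverting the decreasing positive sequence $\lambda_i$ and multiplying by $-2$ reverses the order twice.

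Finally I would apply the Poincar\'{e} separation theorem: for the symmetric $ns\times ns$ matrix $D$ with eigenvalues $\mu_1\geq\cdots\geq\mu_{ns}$ and any $ns\times(n-1)s$ matrix $R$ with orthonormal columns, the eigenvalues $\nu_1\geq\cdots\geq\nu_{(n-1)s}$ of the compression $R^TDR$ satisfy $\mu_i\geq\nu_i\geq\mu_{i+s}$, the shift being the codimension $ns-(n-1)s=s$. Substituting $\nu_i=-2/\lambda_i$ yields $\mu_i\geq-2/\lambda_i\geq\mu_{s+i}$ for $i=1,2,\dots,ns-s$, which is the asserted interlacing. The inertia count from the preceding theorem ($s$ positive and $(n-1)s$ negative eigenvalues of $D$) is consistent with, and illuminates, this chain of inequalities. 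The only step demanding care is the index bookkeeping in the separation theorem, namely matching the shift to the codimension $s$; the reliance on positive definiteness, which legitimizes both $L=QQ^T$ and $Q^TDQ=-2I$, is the other point to flag explicitly, but the underlying computations are routine.
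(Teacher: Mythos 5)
Your proof is correct and takes essentially the same route as the paper: both hinge on the identity $Q^TDQ=-2I_{(n-1)s}$ from Theorem \ref{extra}(v), identify the nonzero eigenvalues of $L=QQ^T$ with those of $Q^TQ$, orthonormalize the columns of $Q$, and finish with an interlacing argument. The only cosmetic difference is that you orthonormalize explicitly via $(Q^TQ)^{-1/2}$ and invoke the Poincar\'{e} separation theorem in its compression form, whereas the paper produces the orthonormalizing matrix $M$ by Gram--Schmidt and completes $QM$ to a full orthogonal matrix $U$ before applying Cauchy interlacing to a principal submatrix of $U^TDU$ --- two formulations of the same theorem.
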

\begin{proof}Let $Q$ be the $ns\times (n-1)s$ vertex-edge incidence
matrix in which each edge weight matrix is replaced by its inverse.
As $Q$ has rank $(n-1)s$, columns of $Q$ are linearly
independent. By performing Gram-Schmidt process on these columns of
$Q$, we can say that there exist an $(n-1)s\times(n-1)s$ nonsingular
matrix $M$ such that the columns of $QM$ are orthonormal. As we have
$(\textbf{1}^T\otimes I_s)Q=0$,
$U=\left[QM,\frac{1}{\sqrt{n}}(\textbf{1}\otimes I_s)\right]$ is an
orthogonal matrix and so the eigenvalues of $D$ and $U^TDU$ are
equal. Now
\begin{eqnarray}U^TDU&=&\left[
          \begin{array}{cc}
            M^TQ^TDQM & \frac{1}{\sqrt{n}}M^TQ^TD(\textbf{1}\otimes I_s) \\
            \frac{1}{\sqrt{n}}(\textbf{1}^T\otimes I_s)DQM & \frac{1}{n}(\textbf{1}^T\otimes I_s)D(\textbf{1}\otimes I_s) \\
          \end{array}
        \right]\nonumber\\
        \label{inter}&=&\left[
          \begin{array}{cc}
            -2M^TM & \frac{1}{\sqrt{n}}M^TQ^TD(\textbf{1}\otimes I_s) \\
            \frac{1}{\sqrt{n}}(\textbf{1}^T\otimes I_s)DQM & \frac{1}{n}(\textbf{1}^T\otimes I_s)D(\textbf{1}\otimes I_s) \\
          \end{array}
        \right]~~\mbox{[By Theorem \ref{extra} Part (v)]}
\end{eqnarray}
Assume that $K=Q^TQ$, then $K$ is nonsingular. Also
$M^TKM=M^TQ^TQM=I$ implies $K^{-1}=MM^T$. As $M$ is nonsingular,
$K^{-1}$ and $M^TM$ have the same eigenvalues. Again the nonzero
eigenvalues of $L=QQ^T$ are the eigenvalues of $K=Q^TQ$. Thus
$-\frac{2}{\lambda_1}\geq-\frac{2}{\lambda_2}\geq\cdots\geq-\frac{2}{\lambda_{(n-1)s}}$
are the eigenvalues of $-2K^{-1}$ as well as $-2M^TM$. Hence by
interlacing theorem we have
$$\mu_{s+i}\leq-\frac{2}{\lambda_i}\leq\mu_i~~\mbox{for}~~i=1,2,\cdots,ns-s.$$
\end{proof}
Our next result, which is immediate from the previous theorem, gives an interlacing inequality for scaler weighted trees.
\begin{coro}Let $T$ be a weighted tree on $n$ vertices with each
weights are positive number. Let $D$ be the distance matrix of $T$
and $L$ be the Laplacian matrix of $T$ that arise by replacing each
weight by its reciprocal. Let $\mu_1>0>\mu_{2}\geq\cdots\geq\mu_{n}$
be the eigenvalues of $D$ and
$\lambda_1\geq\lambda_2\geq\cdots\geq\lambda_{n-1}>\lambda_{n}=0$ be
the eigenvalues of $L$. Then
$$0>-\frac{2}{\lambda_1}\geq\mu_2\geq-\frac{2}{\lambda_2}\geq\mu_3\geq\cdots\geq-\frac{2}{\lambda_{n-1}}\geq\mu_n.$$
\end{coro}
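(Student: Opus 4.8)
The plan is to partially diagonalize $D$ by conjugating with an orthogonal matrix built from the incidence matrix $Q$ (with inverse edge weights), and then read off the interlacing between the spectrum of $D$ and the spectrum of a small symmetric matrix whose eigenvalues are exactly the numbers $-2/\lambda_i$. First I would record that, by Theorem~\ref{weig-inv-tree}, $Q$ has full column rank $(n-1)s$, so Gram--Schmidt on its columns produces a nonsingular $(n-1)s\times(n-1)s$ matrix $M$ for which $QM$ has orthonormal columns. Since the block-row sums of $Q$ vanish we have $(\textbf{1}^{T}\otimes I_s)Q=0$, hence the columns of $QM$ are orthogonal to those of $\frac{1}{\sqrt{n}}(\textbf{1}\otimes I_s)$, the latter being orthonormal. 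Therefore $U=\left[\,QM,\ \frac{1}{\sqrt{n}}(\textbf{1}\otimes I_s)\,\right]$ is an $ns\times ns$ orthogonal matrix, and $D$ and $U^{T}DU$ share the same eigenvalues $\mu_1\ge\cdots\ge\mu_{ns}$.

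Next I would expand $U^{T}DU$ in $2\times 2$ block form. The crucial simplification is its top-left block: by Theorem~\ref{extra}(v) we have $Q^{T}DQ=-2I_{(n-1)s}$, so that block equals $M^{T}Q^{T}DQM=-2M^{T}M$. Thus $-2M^{T}M$ is a leading principal submatrix, of order $(n-1)s$, of the $ns\times ns$ matrix $U^{T}DU$.

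The key step is to identify the spectrum of $-2M^{T}M$ with $\{-2/\lambda_i\}$. Put $K=Q^{T}Q$, which is nonsingular since $Q$ has full column rank. Orthonormality of the columns of $QM$ reads $M^{T}KM=I$, which forces $K^{-1}=MM^{T}$. Because $M$ is nonsingular, $M^{T}M$ and $MM^{T}$ are similar and hence have the same eigenvalues, so the eigenvalues of $M^{T}M$ are exactly those of $K^{-1}$, i.e.\ the reciprocals of the eigenvalues of $K$. Finally, the nonzero eigenvalues of $L=QQ^{T}$ coincide with the eigenvalues of $K=Q^{T}Q$ (the standard fact that $AB$ and $BA$ share nonzero eigenvalues), and these are precisely $\lambda_1\ge\cdots\ge\lambda_{(n-1)s}>0$. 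Consequently the eigenvalues of $-2M^{T}M$ are $-2/\lambda_1\ge-2/\lambda_2\ge\cdots\ge-2/\lambda_{(n-1)s}$.

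To close, I would invoke the Cauchy interlacing theorem for the principal submatrix $-2M^{T}M$ (order $ns-s$) of $U^{T}DU$ (order $ns$). Since the two orders differ by exactly $s$, interlacing yields $\mu_i\ge -2/\lambda_i\ge \mu_{s+i}$ for $i=1,\dots,ns-s$, which is the claim. I expect the only delicate point to be the bookkeeping: matching the index shift of $s$ in Cauchy interlacing with the sizes $(n-1)s$ and $ns$, and confirming that the $s$ discarded directions (the span of $\textbf{1}\otimes I_s$) are exactly what accounts for the $s$ positive eigenvalues of $D$. The algebraic identities $Q^{T}DQ=-2I$ and $K^{-1}=MM^{T}$ do all the real work; everything after that is a standard interlacing argument.
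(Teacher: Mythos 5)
Your proposal is correct and takes essentially the same route as the paper: the paper obtains this corollary as the immediate specialization $s=1$ of Theorem~\ref{interlace}, and your argument is precisely the paper's proof of that theorem --- the orthogonal matrix $U=\left[QM,\ \frac{1}{\sqrt{n}}(\textbf{1}\otimes I_s)\right]$, the identity $Q^TDQ=-2I$ from Theorem~\ref{extra}(v), the relation $K^{-1}=MM^T$ with $K=Q^TQ$ sharing its nonzero spectrum with $L=QQ^T$, and Cauchy interlacing. The only cosmetic difference is that you inline that theorem's proof rather than cite it, and you leave implicit the final (trivial) step of setting $s=1$, chaining $\mu_i\geq-\frac{2}{\lambda_i}\geq\mu_{i+1}$ for $i=1,\dots,n-1$, and noting $-\frac{2}{\lambda_1}<0$ since $\lambda_1>0$.
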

\textbf{Acknowledgement:} M. Rajesh Kannan would like to thank
Department of Science and Technology for the financial
support.

\bibliographystyle{amsplain}
\bibliography{inversedistbib}

%
%\begin{thebibliography}{Md.}
%
%\bibitem{balaji} R. Balaji and R.B. Bapat, Block distance matrices, Electron. J. Linear Algebra 16 (2007) 435-443.
%
%\bibitem{bapatbook} R.B. Bapat, Graphs and Matrices, second edition, Universitext,
%Springer/Hindustan Book Agency, London/New Delhi, 2014.
%
%\bibitem{bapat1} R.B. Bapat, S.J. Kirkland and  M. Neumann, On distance matrices and
%Laplacians, Linear Algebra Appl. 401 (2005) 193-209.
%
%\bibitem{bapat2} R.B. Bapat, Determinant of the distance matrix of a tree with
%matrix weights, Linear Algebra Appl. 416 (2006) 2-7.
%
%\bibitem{bapat3} R.B. Bapat, A.K. Lal and  S. Pati, A q-analogue of the distance
%matrix of a tree, Linear Algebra Appl. 416 (2006) 799-814.
%
%
%\bibitem{grah1} R.L. Graham and L. Lovász, Distance matrix polynomials of trees, Adv.
%Math. 29(1) (1978) 60-88.
%
%\bibitem{grah2} R.L. Graham and  H.O. Pollak, On the addressing problem for loop
%switching, Bell System Tech. J. 50 (1971) 2495-2519.
%
%\bibitem{hor} R.A. Horn and C.R. Johnson, Topics in matrix Analysis, Cambridge University Press, Cambridge, (1991).
%
%\bibitem{merr} R. Merris, The distance spectrum of a tree, J. Graph Theory 14 (1990) 365-369.
%
%\bibitem{zhou} Hui Zhou, The inverse of the distance matrix of a distance
%well-defined graph, Linear Algebra Appl. 517 (2017) 11-29.
%
%\bibitem{zhou1} Hui Zhou and Qi Ding, The distance matrix of a tree with weights on its arcs, Linear Algebra Appl. 511 (2016) 365-377.
%
%\end{thebibliography}

\end{document}